\theoremstyle{plain}
\newtheorem{theorem}{\bf Theorem}[section]
\newtheorem{lemma}[theorem]{\bf Lemma}
\theoremstyle{definition}
\newtheorem{definition}[theorem]{\bf Definition}
\newtheorem{example}[theorem]{\bf Example}
\newtheorem{remark}[theorem]{\bf Remark}
\newcommand{\eqa}[1]{
\begin{align*}
#1
\end{align*}}
\newcommand{\nai}[2]{\langle #1,#2\rangle}
\newcommand{\dom}[1]{{{\rm{dom}}{(#1)}}}
\title{Notes on the Krupa-Zawisza Ultrapower of Self-Adjoint Operators}
\author{Hiroshi Ando \and Izumi Ojima \and Hayato Saigo}
\begin{document}
\maketitle
\begin{abstract}
Let $\omega\in \beta \mathbb{N}\setminus \mathbb{N}$ be a free ultrafilter on $\mathbb{N}$. It is known that there is a difficulty in constructing the ultrapower of unbounded operators. Krupa and Zawisza gave a rigorous definition of the ultrapower $A_{\omega}$ of a self-adjoint operator $A$. In this note, we give alternative description of $A_{\omega}$ and the Hilbert space $H(A)$ on which $A_{\omega}$ is densely defined, which provides a criterion to determine to which representing sequence $(\xi_n)_n$ of a given vector $\xi\in \text{dom}(A_{\omega})$ has the property that $A_{\omega}\xi=(A\xi_n)_{\omega}$ holds. An explicit core for $A_{\omega}$ is also described.
\end{abstract}

\noindent
\textbf{Keywords}. Ultraproduct, unbounded self-adjoint operators 

\medskip

\noindent
{\bf Mathematics Subject Classification (2010)} 47A10, 03C20.  

\medskip
\section{Introduction}
Throughout the paper, we fix a free ultrafilter $\omega$ on $\mathbb{N}$ and a separable infinite-dimensional Hilbert space $H$. We denote by $\mathbb{B}(H)$ the algebra of all bounded operators in $H$. Let $H_{\omega}$ be the Hilbert space ultraproduct of $H$. 
Each bounded sequence $(a_n)_n\subset \mathbb{B}(H)$ of bounded operators in $H$ defines a bounded operator $(a_n)_{\omega}\in \mathbb{B}(H)$, called the ultraproduct of $(a_n)_n$ by the formula
\[(a_n)_{\omega}(\xi_n)_{\omega}:=(a_n\xi_n)_{\omega}, \ \ \ \ \ (\xi_n)_{\omega}\in H_{\omega}.\]
The ultrapower (or more generally the ultraproduct) of a sequence of bounded operators has been used as an efficient tool for the analysis on Hilbert spaces. 
In view of its usefullness, it is natural to consider a corresponding notion of ultrapower $A_{\omega}$ for unbounded self-adjoint operator $A$. However, there arises essential difficulties on the definition:
\begin{itemize}
\item[(1)] Definition of the domain $\dom{A_{\omega}}$ of $A_{\omega}$. 
\item[(2)] Self-adjointness of $A_{\omega}$.
\item[(3)] Interpretation of $A_{\omega}(\xi_n)_{\omega}=(A\xi_n)_{\omega}$ for $\xi=(\xi_n)_{\omega}\in \dom{A_{\omega}}.$
\end{itemize}

Regarding (1), it does not makes sense to define $\dom{A_{\omega}}$ to be the subspace $\dom{A}_{\omega}$ of all $\xi\in H_{\omega}$ which is represented by a sequence $(\xi_n)_n$ where $\xi_n\in \dom{A}$ for all $n$, because $\dom{A}_{\omega}$ is simply the whole $H_{\omega}$ and the definition $A_{\omega}(\xi_n)_{\omega}=(A\xi_n)_{\omega}$ is not well-defined. Importance of the question (2) should be clear. (3) is probably the most delicate problem. Even if we could manage to define $\dom{A_{\omega}}$ and suppose $\xi \in \dom{A_{\omega}}$ is represented by $(\xi_n)_n$ with $\xi_n\in \dom{A}$ for all $n$, it might be the case that there exists another $(\xi_n')_n$ which also represents $\xi$ (i.e., $\lim_{n\to \omega}\|\xi_n-\xi_n'\|=0$ holds), and $\xi_n'\in \dom{A}$ for all $n$ holds as well, and yet $(A\xi_n)_{\omega}\neq (A\xi_n')_{\omega}$.
\begin{example}\label{ex: (Axi_n) not equals (Axi_n')}
Let $A$ be a self-adjoint operator and assume that there is an orthonormal base $\{\eta_n\}_{n=1}^{\infty}$ of $H$ consisting of eigenvectors of $A$ with $A\eta_n=n\eta_n,\ \ n\ge 1$. Let $\eta \in \dom{A}$, and consider two sequences 
\[\xi_n:=\eta,\ \ \xi_n':=\eta+\frac{1}{n}\eta_n \ \ \ (n\ge 1).\]
Then it is clear that $\xi_n,\xi_n'\in \dom{A}$, that $(\xi_n)_n, (\xi_n')_n$ defines the same element $\xi=(\xi_n)_{\omega}=(\xi_n')_{\omega}\in H_{\omega}$, but 
\[\lim_{n\to \omega}\|A\xi_n-A\xi_n'\|=\lim_{n\to \omega}\|\eta_n\|=1\neq 0,\]
whence $(A\xi_n)_{\omega}\neq (A\xi_n')_{\omega}$. Should we define $A_{\omega}\xi=(A\xi_n)_{\omega}$ or $A_{\omega}\xi=(A\xi_n')_{\omega}$? 
\end{example}
Despite the above difficulty, Krupa and Zawisza \cite{KZ} gave a rigorous definition of $A_{\omega}$, as well as interesting applications to Schr\"odinger operators. To define $\dom{A_{\omega}}$ in any sensible way, it is necessary to note that such a domain must be in the subspace of $\mathscr{D}_A$, given as the set of all $\xi\in H_{\omega}$ which has a representing sequence $(\xi_n)_n$ of vectors from $\dom{A}$ such that $(A\xi_n)_n$ is also norm-bounded. We put $H(A)=\overline{\mathscr{D}_A}$. 
We recall from \cite{KZ} the notion of partial ultrapowers.
\begin{definition} 
Let $\mathcal{H}\subset H_{\omega}$ be a closed subspace. A densely defined operator $\mathscr{A}$ in $\mathcal{H}$ is called a {\it partial ultrapower} (p.u. for short) of $A$ in $\mathcal{H}$, if for any $\xi \in \dom{\mathscr{A}}$ there is $(\xi_n)_n\subset \dom{A}$ such that $\xi=(\xi_n)_{\omega}$ and $\mathscr{A}\xi=(A\xi_n)_{\omega}.$
\end{definition}
(One of) the fundamental results of \cite{KZ} is 
\begin{theorem}\label{thm: KZ p.u.}
\begin{itemize}
\item[{\rm{(1)}}] there is a p.u. $A_{\omega}$ of $A$ in $H(A)$ satisfying $\dom{A_{\omega}}=\mathscr{D}_A$, uniquely determined by the property that for $\xi\in \mathscr{D}_A$ and $\eta\in H(A)$, 
$A_{\omega}\xi=\eta$ if and only if there is a representative $(\xi_n)_n\subset \dom{A}$ of $\xi$ satisfying $(A\xi_n)_{\omega}=\eta$. 
\item[{\rm{(2)}}] $A_{\omega}$ is the maximal among all p.u.'s of $A$. I.e., if $\mathscr{A}$ is a p.u. of $A$ in $\mathcal{H}$, then $\mathcal{H}\subset H(A)$ and $\mathscr{A}=A_{\omega}|_{\dom{\mathscr{A}}}$.
\item[{\rm{(3)}}] $A_{\omega}$ is self-adjoint in $H(A)$. Moreover, $(A_{\omega}-i)^{-1}$ is the restriction of $((A-i)^{-1})_{\omega}$ to $H(A)$ and ${\rm{sp}}(A_{\omega})={\rm{sp}}(A)$ holds.  
\end{itemize}
\end{theorem}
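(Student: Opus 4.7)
The plan is to view everything through the bounded resolvents $B_{\pm}:=((A\mp i)^{-1})_{\omega}\in \mathbb{B}(H_{\omega})$, which are mutually adjoint ($B_+^{*}=B_-$) since $A$ is self-adjoint. The identity $\dom{A}=\ran{(A-i)^{-1}}$ combined with the bound $\|\xi\|\le \|(A-i)\xi\|$ for $\xi\in \dom{A}$ shows that $\mathscr{D}_A=\ran{B_+}$ and hence $H(A)=\overline{\ran{B_+}}$, so
\[H(A)^{\perp}=\ker{B_{-}}=\bigl\{(\zeta_n)_{\omega}\in H_{\omega}:\lim_{n\to \omega}\|(A+i)^{-1}\zeta_n\|=0\bigr\}.\]
This characterization of $H(A)^{\perp}$ drives everything that follows.

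For (1), the key observation is that although $(A\xi_n)_{\omega}$ depends on the choice of representative of $\xi\in \mathscr{D}_A$, the ambiguity lies entirely in $H(A)^{\perp}$: if $(\xi_n),(\xi_n')$ both represent $\xi$ with $(A\xi_n),(A\xi_n')$ bounded and $\delta_n:=\xi_n-\xi_n'$, then for any $\eta\in \mathscr{D}_A$ with good representative $(\eta_n)$, self-adjointness of $A$ gives $\langle(A\delta_n)_{\omega},\eta\rangle=\lim_{n\to\omega}\langle\delta_n,A\eta_n\rangle=0$. Hence $A_{\omega}\xi:=P_{H(A)}(A\xi_n)_{\omega}$ is a well-defined linear operator on $\mathscr{D}_A$. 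To upgrade this to a p.u., I must exhibit a representative $(\tilde\xi_n)$ with $(A\tilde\xi_n)_{\omega}=A_{\omega}\xi$ exactly. Starting from any good $(\xi_n)$, write $(A\xi_n)_{\omega}=A_{\omega}\xi+\eta_0$ with $\eta_0=(\eta_{0,n})_{\omega}\in H(A)^{\perp}$. Then $\delta_n:=(A+i)^{-1}\eta_{0,n}\in \dom{A}$ satisfies $\|\delta_n\|\to_{\omega}0$ (by the characterization above) and $A\delta_n=\eta_{0,n}-i\delta_n$, so $(A\delta_n)_{\omega}=\eta_0$; the corrected representative $\tilde\xi_n:=\xi_n-\delta_n$ then works. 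The characterization of $A_{\omega}$ in (1) is then immediate, and uniqueness is automatic since any $\eta\in H(A)$ realized as $(A\xi_n)_{\omega}$ must coincide with its own $H(A)$-projection.

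Part (2) is mainly bookkeeping: the defining representative of each $\xi\in \dom{\mathscr{A}}$ witnesses $\xi\in \mathscr{D}_A$ with $\mathscr{A}\xi=(A\xi_n)_{\omega}\in \mathcal{H}$, while density of $\dom{\mathscr{A}}$ in $\mathcal{H}$ forces $\mathcal{H}\subset H(A)$, after which (1) gives $\mathscr{A}=A_{\omega}|_{\dom{\mathscr{A}}}$. For (3), symmetry of $A_{\omega}$ is an ultralimit calculation via p.u.\ representatives, and self-adjointness reduces to showing $(A_{\omega}\pm i)\mathscr{D}_A=H(A)$: given $\zeta=(\zeta_n)_{\omega}\in H(A)$, set $\xi_n:=(A-i)^{-1}\zeta_n$, so that $\xi=B_+\zeta\in \mathscr{D}_A$ and $(A\xi_n)_{\omega}=\zeta+i\xi\in H(A)$; (1) then yields $A_{\omega}\xi=\zeta+i\xi$, i.e., $(A_{\omega}-i)\xi=\zeta$, and this simultaneously identifies $(A_{\omega}-i)^{-1}$ with $B_+|_{H(A)}$. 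Repeating this argument with any $\lambda\in \mathbb{C}\setminus \mathrm{sp}(A)$ in place of $i$ gives $\mathrm{sp}(A_{\omega})\subset \mathrm{sp}(A)$, while for $\lambda\in \mathrm{sp}(A)$ a Weyl sequence $(\xi_n)\subset \dom{A}$ with $\|\xi_n\|=1$ and $\|(A-\lambda)\xi_n\|\to 0$ yields an eigenvector $(\xi_n)_{\omega}\in \mathscr{D}_A$ of $A_{\omega}$ at eigenvalue $\lambda$, giving the reverse inclusion. I expect the main obstacle to be the well-definedness step in (1): recognising that the ambiguity in $(A\xi_n)_{\omega}$ lives exactly in $H(A)^{\perp}$ and that one can subtract off the perpendicular part using $(A+i)^{-1}$. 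Once this is in place, symmetry, surjectivity of $A_{\omega}\pm i$, the resolvent formula, and the spectral equality all follow essentially for free.
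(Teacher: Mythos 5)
Your argument is correct, but note that the paper does not actually prove this theorem: it is quoted from Krupa--Zawisza \cite{KZ}, and the paper's own contribution is a second, independent construction via the $\omega$-equicontinuous vectors $K(A)$, the restricted unitary group $u(t)_{\omega}|_{K(A)}$, and Stone's theorem, from which self-adjointness and the resolvent identity are re-derived (Lemma \ref{lem: resolvent of Atilde is UP of A}) only after the auxiliary operator $\widetilde{A}_{\omega}$ is shown to equal $A_{\omega}$. Your route is the direct resolvent one: the identifications $\mathscr{D}_A=\ran{((A-i)^{-1})_{\omega}}$ and $H(A)^{\perp}=\ker{((A+i)^{-1})_{\omega}}$ are exactly right, and the two key moves --- (i) the ambiguity $(A\xi_n)_{\omega}-(A\xi_n')_{\omega}$ lies in $H(A)^{\perp}$ because $\langle A\delta_n,\eta_n\rangle=\langle \delta_n,A\eta_n\rangle\to 0$, and (ii) the correction $\tilde\xi_n=\xi_n-(A+i)^{-1}\eta_{0,n}$ kills the perpendicular part without changing the class of $\xi$ --- are precisely what make $A_{\omega}\xi:=P_{H(A)}(A\xi_n)_{\omega}$ a genuine partial ultrapower rather than merely a projected operator. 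The surjectivity of $A_{\omega}\pm i$ via $\xi_n=(A\mp i)^{-1}\zeta_n$, the identification of $(A_{\omega}-i)^{-1}$ with $((A-i)^{-1})_{\omega}|_{H(A)}$, and the spectrum computation (resolvent transfer for $\lambda\notin{\rm{sp}}(A)$, Weyl sequences giving genuine eigenvectors in $H_{\omega}$ for $\lambda\in{\rm{sp}}(A)$) are all standard and complete. What your approach buys is a short, self-contained proof of the quoted theorem; what it does not give is the paper's actual objective, namely an a priori criterion (properness, $\omega$-equicontinuity) telling you \emph{which} representative satisfies $A_{\omega}\xi=(A\xi_n)_{\omega}$ before applying $A$ --- your construction produces a good representative only a posteriori, by subtracting the $H(A)^{\perp}$-component. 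It is worth observing that your characterization of $H(A)^{\perp}$ as $\ker{((A+i)^{-1})_{\omega}}$ is implicitly confirmed by the computation in Remark \ref{rem: (Axi_n') is bad}, where the paper shows $(\eta_n)_{\omega}\perp\mathscr{D}_A$ by the same adjoint-of-resolvent trick.
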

Note that in (1), the uniqueness of $\eta$ is guaranteed by the condition $\eta\in H(A)$: in fact in Example \ref{ex: (Axi_n) not equals (Axi_n')}, $(A\xi_n)_{\omega}\in H(A)$, while $(A\xi_n')_{\omega}\notin H(A)$ (see Remark \ref{rem: (Axi_n') is bad}). Despite their success, what seems to be unsatisfactory is that there is no a priori criterion for a given $\xi\in \mathscr{D}_A$ to choose an appropriate representative $(\xi_n)_n$ such that $(A\xi_n)_{\omega}$ is well-defined and is in $H(A)$. Whether a chosen representative is indeed appropriate or not, can be seen only after one applies $A$ and to know that the resulting vector is in the closure of $\mathscr{D}_A$. In this short note, we give an alternative characterization of such an appropriate sequence, which we call a proper $A$-sequence, and give a new description of $A_{\omega}$ in terms of an auxiliary operator $\widetilde{A}_{\omega}$ by checking the validity of the equality $A_{\omega}=\widetilde{A}_{\omega}$. More precisely, we show that a bounded sequence $(\xi_n)_n$ of vectors from $\dom{A}$ has a property that $A_{\omega}(\xi_n)_{\omega}=(A\xi_n)_{\omega}$, if and only if $(A\xi_n)_n$ is bounded, and for every $\varepsilon>0$, there is $a>0$, $(\eta_n)_n\in \ell^{\infty}(\mathbb{N},H)$ with $\eta_n\in 1_{[-a,a]}(A)H$ for each $n\in \mathbb{N}$, such that $\lim_{n\to \omega}\|\xi_n-\eta_n\|_A<\varepsilon$. $(\|\cdot \|_A$ is the graph norm). Moreover, a bounded sequence $(\xi_n)_n$ defines an element in $H(A)$, if and only if the family of maps $\{f_n\colon \mathbb{R}\to H\}_{n=1}^{\infty}$ given by $f_n(t)=e^{itA}\xi_n$, is $\omega$-equicontinuous (see Definition \ref{def: omega-equicontinuous}). We believe that this description will make Krupa-Zawisza analysis more accessible and give new insights to them.    
\section{Preliminaries}

Let $\ell^{\infty}(\mathbb{N},H)$ be the space of bounded sequences in $H$. The ultrapower $H_{\omega}$ of $H$ is defined by 
$H_{\omega}=\ell^{\infty}(\mathbb{N},H)/\mathcal{T}_{\omega}$, where $\mathcal{T}_{\omega}$ is the subspace of $\ell^{\infty}(\mathbb{N},H)$ consisting of sequences tending to 0 in norm along $\omega$. The canonical image of $(\xi_n)_n\in \ell^{\infty}(\mathbb{N},H)$ is written as $(\xi_n)_{\omega}$, and $H_{\omega}$ is again a Hilbert space (non-separable in general) by the inner product
\[\nai{\xi}{\eta}=\lim_{n\to \omega}\nai{\xi_n}{\eta_n},\ \ \ \ \ \xi=(\xi_n)_{\omega},\ \eta=(\eta_n)_{\omega}\in H_{\omega}.\]
We identify $\xi \in H$ with its canonical image $(\xi,\xi,\cdots )_{\omega}\in H_{\omega}$, so that $H$ is a closed subspace of $H_{\omega}$. 
Let $\{a_n\}_{n=1}^{\infty}$ be a sequence of bounded operators on $H$ with $\sup_n \|a_n\|<\infty$. We then define a bounded operator $(a_n)_{\omega}\in \mathbb{B}(H_{\omega})$ by
\[(a_n)_{\omega}(\xi_n)_{\omega}:=(a_n\xi_n)_{\omega},\ \ \ \ (\xi_n)_{\omega}\in H_{\omega}.\]
$(a_n)_{\omega}$ is well-defined by the above, and $\|(a_n)_{\omega}\|=\lim_{n\to \omega}\|a_n\|$ holds. 
For a linear operator $T$ on $H$, the domain of $T$ is denoted as $\dom{T}$. For $\xi\in \dom{T}$, we denote $\|\xi\|_T$ the graph norm of $T$ given by $(\|\xi\|^2+\|T\xi\|^2)^{\frac{1}{2}}$.  For details about operator theory, see e.g., \cite{Sch}.  
\section{Construction of $\widetilde{A}_{\omega}$}
Let $A$ be a self-adjoint operator on a separable Hilbert space $H$, and let $u(t)=e^{itA} (t\in \mathbb{R})$. We introduce several subspaces of $H_{\omega}$. First, we need to introduce the notion of $\omega$-equicontinuity which has been used in the literature (see \cite{Ki,MaTo}).
\begin{definition}\label{def: omega-equicontinuous}
Let $(X_1,d_1),\ (X_2,d_2)$ be metric spaces. A family of maps $\{f_n\colon X_1\to X_2\}_{n=1}^{\infty}$ is said to be $\omega$-{\it equicontiuous} if for every $x\in X$ and $\varepsilon>0$, there exists $\delta=\delta_{x,\varepsilon}>0$ and $W\in \omega$ such that for every $x'\in X$ with $d_1(x,x')<\delta$ and $n\in W$, we have 
\[d_2(f_n(x),f_n(x'))<\varepsilon.\] 
\end{definition}
\begin{lemma}\label{lem: omega-equicontinuity is well-defined} Let $(\xi_n)_n\in \ell^{\infty}(\mathbb{N},H)$ be a sequence such that  $\{f_n:t\mapsto e^{itA}\xi_n\}_{n=1}^{\infty}$ is $\omega$-equicontinuous. Then $t\mapsto (e^{itA}\xi_n)_{\omega}$ is continuous. Moreover, if $(\xi_n')_n\in \ell^{\infty}(\mathbb{N},H)$ satisfies $\lim_{n\to \omega}\|\xi_n-\xi_n'\|=0$, then $\{f_n':t\mapsto e^{itA}\xi_n'\}_{n=1}^{\infty}$ is also $\omega$-equicontinuous.  
\end{lemma}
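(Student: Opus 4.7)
The plan is to treat the two assertions separately. Both reduce to triangle-inequality arguments combined with the fact that $e^{itA}$ is unitary, plus the general principle that a quantitative bound valid on a set $W\in\omega$ survives in the $\omega$-limit.

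For the continuity statement, I would fix $t\in\mathbb{R}$ and $\varepsilon>0$ and apply the $\omega$-equicontinuity of $\{f_n\}$ at the point $t$ to obtain $\delta>0$ and $W\in\omega$ such that $\|e^{itA}\xi_n-e^{it'A}\xi_n\|<\varepsilon$ whenever $|t-t'|<\delta$ and $n\in W$. Since $W\in\omega$, this inequality passes to the $\omega$-limit, so $\|(e^{itA}\xi_n)_{\omega}-(e^{it'A}\xi_n)_{\omega}\|\le\varepsilon$ for all such $t'$, witnessing continuity at $t$. (The fact that $(e^{itA}\xi_n)_n\in\ell^{\infty}(\mathbb{N},H)$, so that the $H_{\omega}$-vector makes sense, is immediate from unitarity.)

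For the perturbation-invariance statement, I would run a standard three-$\varepsilon$ argument. Fix $t\in\mathbb{R}$ and $\varepsilon>0$ and let $\delta>0$, $W_1\in\omega$ witness $\omega$-equicontinuity of $\{f_n\}$ at $t$ with the bound $\varepsilon/3$ in place of $\varepsilon$. Because $\lim_{n\to\omega}\|\xi_n-\xi_n'\|=0$, there is $W_2\in\omega$ on which $\|\xi_n-\xi_n'\|<\varepsilon/3$. For $|t-t'|<\delta$ and $n\in W_1\cap W_2\in\omega$, one splits
\eqa{
\|e^{itA}\xi_n'-e^{it'A}\xi_n'\|\le\|e^{itA}(\xi_n'-\xi_n)\|+\|e^{itA}\xi_n-e^{it'A}\xi_n\|+\|e^{it'A}(\xi_n-\xi_n')\|
}
and uses unitarity of $e^{itA}$ and $e^{it'A}$ to bound the outer terms by $\varepsilon/3$ each, giving a total of $\varepsilon$. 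Thus the same $\delta$ witnesses $\omega$-equicontinuity of $\{f_n'\}$ at $t$.

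There is no real obstacle here; the entire content is that the constant $\delta_{t,\varepsilon}$ produced by the hypothesis is independent of both $n$ and the perturbation $\xi_n\mapsto\xi_n'$, so a harmless rescaling of $\varepsilon$ absorbs the $\omega$-small correction. The one conceptual point to keep in mind is that, in the definition of $\omega$-equicontinuity, the set $W$ depends on $x$ and $\varepsilon$ but \emph{not} on $x'$, which is precisely what makes the intersection $W_1\cap W_2$ do its job uniformly in $t'$ throughout a $\delta$-neighborhood of $t$.
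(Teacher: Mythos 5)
Your proposal is correct and follows essentially the same route as the paper's proof: passing the equicontinuity bound on a set $W\in\omega$ to the ultralimit for the continuity claim, and the same three-term triangle-inequality split over $W_1\cap W_2$ (using unitarity of $e^{itA}$) for the perturbation statement. No gaps.
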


\begin{proof}
Let $t\in \mathbb{R}, \varepsilon>0$ be given. There exists $\delta>0$ and $W_1\in \omega$ such that for any $s\in (t-\delta,t+\delta)$ and $n\in W_1$, we have $\|e^{itA}\xi_n-e^{isA}\xi_n\|<\varepsilon/3$. This means that $\|(e^{itA}\xi_n)_{\omega}-(e^{isA}\xi_n)_{\omega}\|<\varepsilon/3$, whence $t\mapsto (e^{itA}\xi_n)_{\omega}$ is continuous. By $(\xi_n)_{\omega}=(\xi_n')_{\omega}$, $W_2:=\{n\in \mathbb{N}; \|\xi_n-\xi_n'\|<\varepsilon/3\}\in \omega$. Then for $s\in (t-\delta,t+\delta)$ and $n\in W:=W_1\cap W_2\in \omega$, we have 
\eqa{
\|e^{itA}\xi_n'-e^{isA}\xi_n'\|&\le \|e^{itA}(\xi_n'-\xi_n)\|+\|e^{itA}\xi_n-e^{isA}\xi_n\|+\|e^{isA}(\xi_n-\xi_n')\|\\
&<\varepsilon.
}
Therefore $\{t\mapsto e^{itA}\xi_n'\}_{n=1}^{\infty}$ is $\omega$-equicontinuous. 
\end{proof}

\begin{definition}
A vector $\xi=(\xi_n)_{\omega}\in H_{\omega}$ is called $A${\it -regular} if $\{t\mapsto e^{itA}\xi_n\}_{n=1}^{\infty}$ is $\omega$-equicontinuous. 
By Lemma \ref{lem: omega-equicontinuity is well-defined}, this notion does not depend on the choice of the representing sequence $(\xi_n)_n$. 
\end{definition}

\begin{definition}Under the above notation, we define:
\begin{list}{}{}
\item[(1)] Let $K(A)$ be the set of all $A$-regular vectors of $H_{\omega}$.
\item[(2)] Let $\dom{\widetilde{A}_{\omega}}$ be the set of $\xi \in K(A)$ for which $\lim_{t\to 0}\frac{1}{t}(u(t)_{\omega}-1)\xi$ exists.  
\end{list}
\end{definition}
\begin{lemma}\label{lem: K is closed}
$K(A)$ is a closed subspace of $H_{\omega}$ invariant under $u(t)_{\omega}$ for all $t\in \mathbb{R}$.
\end{lemma}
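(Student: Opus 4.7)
The plan is to check in turn the three properties: linear subspace, closedness, and $u(t)_\omega$-invariance. Throughout I will exploit the fact, used already in the proof of Lemma \ref{lem: omega-equicontinuity is well-defined}, that $\omega$-equicontinuity is preserved under standard $\varepsilon/3$-arguments because the unitaries $e^{isA}$ are norm-preserving.

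First I would verify that $K(A)$ is a linear subspace. Given $\xi, \eta \in K(A)$ with representatives $(\xi_n)_n, (\eta_n)_n$ and scalars $\alpha,\beta$, the sum $\alpha\xi+\beta\eta$ is represented by $(\alpha\xi_n+\beta\eta_n)_n$. For $t\in\mathbb{R}$ and $\varepsilon>0$, I apply $\omega$-equicontinuity of the two families at $t$ with tolerance $\varepsilon/(2(|\alpha|+|\beta|+1))$ to obtain $\delta_1,\delta_2>0$ and $W_1,W_2\in\omega$; taking $\delta:=\min(\delta_1,\delta_2)$ and $W:=W_1\cap W_2\in\omega$, linearity of $e^{isA}$ and the triangle inequality yield the required estimate.

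For closedness, suppose $\xi^{(k)}\to\xi$ in $H_\omega$ with each $\xi^{(k)}\in K(A)$, and fix a representative $(\xi_n)_n$ of $\xi$. Given $t\in\mathbb{R}$ and $\varepsilon>0$, choose $k$ so large that $\|\xi-\xi^{(k)}\|<\varepsilon/6$, and fix a representative $(\xi_n^{(k)})_n$ of $\xi^{(k)}$. The definition of the $H_\omega$-norm gives
\[
W_2:=\{n\in\mathbb{N}: \|\xi_n-\xi_n^{(k)}\|<\varepsilon/3\}\in\omega.
\]
By $A$-regularity of $\xi^{(k)}$ there exist $\delta>0$ and $W_1\in\omega$ with $\|e^{itA}\xi_n^{(k)}-e^{isA}\xi_n^{(k)}\|<\varepsilon/3$ for all $s\in(t-\delta,t+\delta)$ and $n\in W_1$. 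For $n\in W_1\cap W_2\in\omega$ and such $s$, inserting $\pm e^{itA}\xi_n^{(k)}$ and $\pm e^{isA}\xi_n^{(k)}$ and using unitarity of $e^{itA},e^{isA}$ gives $\|e^{itA}\xi_n-e^{isA}\xi_n\|<\varepsilon$. Hence $\xi\in K(A)$.

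Finally, $u(t)_\omega$-invariance is a straightforward translation argument. For $\xi\in K(A)$ with representative $(\xi_n)_n$, the vector $u(t)_\omega\xi$ is represented by $(e^{itA}\xi_n)_n$, and $s\mapsto e^{isA}(e^{itA}\xi_n)=e^{i(s+t)A}\xi_n$ is a time-shift of the family $s\mapsto e^{isA}\xi_n$. Applying $\omega$-equicontinuity of the latter at the point $s+t$ for a given $s$ immediately yields $\omega$-equicontinuity of the translated family at $s$. I expect the closedness step to be the most delicate, since it is the only one requiring a genuine approximation argument; the subspace and invariance properties reduce to formal manipulations once linearity and the group law of $\{e^{itA}\}$ are in place.
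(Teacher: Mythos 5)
Your proposal is correct and follows essentially the same route as the paper: the paper declares the subspace and $u(t)_{\omega}$-invariance properties to be clear and writes out only the closedness, which it proves by exactly your $\varepsilon/3$ approximation argument (approximate $\xi$ by some $\eta\in K(A)$, combine the equicontinuity set $W_1$ for $\eta$ with the set $W_2$ where $\|\xi_n-\eta_n\|<\varepsilon/3$, and use unitarity of $e^{itA}$). Your explicit verifications of linearity and invariance are accurate fillings-in of the steps the paper omits.
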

\begin{proof}
It is clear that $K(A)$ is a subspace of $H_{\omega}$, and that $K(A)$ is $u(t)_{\omega}$-invariant for all $t\in \mathbb{R}$. 
Let $\xi=(\xi_n)_{\omega} \in \overline{K(A)}$ and $\varepsilon>0$. There exists $\eta =(\eta_n)_{\omega}\in K(A)$ such that $\|\xi-\eta\|<\varepsilon/3$. 
Let $t\in \mathbb{R}$. By the $\omega$-equicontinuity of $\{f_n:t\mapsto e^{itA}\eta_n\}_{n=1}^{\infty}$, there exists $\delta>0$ and $W_1\in \omega$ such that for each $s\in (t-\delta,t+\delta)$ and $n\in W_1$, we have $\|e^{itA}\eta_n-e^{isA}\eta_n\|<\varepsilon/3$. Let $W_2:=\{n\in \mathbb{N}; \|\xi_n-\eta_n\|<\varepsilon/3\}\in \omega$. 
Then we have for $s\in (t-\delta,t+\delta)$ and $n\in W:=W_1\cap W_2\in \omega$, that
\eqa{
\|e^{itA}\xi_n-e^{isA}\xi_n\|&\le \|e^{itA}(\xi_n-\eta_n)\|+\|e^{itA}\eta_n-e^{isA}\eta_n\|+\|e^{isA}(\eta_n-\xi_n)\|\\
&<\varepsilon.
}

Therefore $\xi=(\xi_n)_{\omega}$ is $A$-regular, and $\xi \in K(A)$. 
\end{proof}
By Lemma \ref{lem: K is closed}, $v(t):=u(t)_{\omega}|_{K(A)}$ is a continuous one-parameter unitary group of $K(A)$. Therefore by Stone Theorem, there exists a self-adjoint operator $\widetilde{A}_{\omega}$ with domain $\dom{\widetilde{A}_{\omega}}$ such that 
\[i\widetilde{A}_{\omega}\xi=\lim_{t\to 0}\frac{1}{t}(v(t)-1)\xi,\ \ \ \ \xi \in \dom{\widetilde{A}_{\omega}}.\]
In the sequel, we will show that $\widetilde{A}_{\omega}\xi=(A\xi_n)_{\omega}$ for appropriate $(\xi_n)_n$ representing $\xi \in \dom{\widetilde{A}_{\omega}}$.  
\begin{definition}\label{def: proper A-sequences}Let $A$ be a self-adjoint operator on $H$.\\
(1)\ A sequence $(\xi_n)_n\in \ell^{\infty}(\mathbb{N},H)$ is called an {\it $A$-sequence} if $\xi_n\in \dom{A}$ for all $n\ge \mathbb{N}$.\ We denote the space of $A$-sequences as $\ell^{\infty}(\mathbb{N},\dom{A})$.\\
(2) An $A$-sequence $(\xi_n)_n$ is called {\it proper}, if it satisfies the following condition:\\
\ \ \ (*) For each $\varepsilon>0$, there exists $a>0$ and an $A$-sequence $(\eta_n)_n$ with the following properties:
\begin{itemize}
\item[{\rm{(i)}}] $\eta_n\in 1_{[-a,a]}(A)H$ for all $n\ge 1$.
\item[{\rm{(ii)}}] $(A\xi_n)_n\in \ell^{\infty}(\mathbb{N},H)$, and $\displaystyle \lim_{n\to \omega}\|\xi_n-\eta_n\|_A<\varepsilon$. 
\end{itemize}

\end{definition}
\begin{definition}
As in \cite{KZ}, we let $\mathscr{D}_A$ be the set of all $\xi\in H_{\omega}$ which is represented by an $A$-sequence $(\xi_n)_n$ such that $(A\xi_n)_n$ is bounded, and let $H(A)=\overline{\mathscr{D}_A}$. We also define related subspaces:
define $\widehat{\mathscr{D}}_A$ to be the space of all $\xi\in H_{\omega}$ which is represented by a proper $A$-sequence. We also define $\mathscr{D}_0$ to be the set of all $\xi\in H_{\omega}$ which has a representative $(\xi_n)_n$ satisfying $\xi_n\in 1_{[-a,a]}(A)H$ for all $n\in \mathbb{N}$, where $a>0$ is a constant independent of $n$.\\
It is clear that $\mathscr{D}_0\subset \widehat{\mathscr{D}}_A\subset \mathscr{D}_A$.    
\end{definition}
The main result of the paper is that $\widehat{\mathscr{D}}_A=\mathscr{D}_A, K(A)=H(A)$ and $A_{\omega}=\widetilde{A}_{\omega}=\overline{A_{\omega}|_{\mathscr{D}_0}}$. 

In this section we will show that 
\begin{theorem}\label{thm: definition of A^{omega} is appropriate}$
\dom{\widetilde{A}}=\widehat{\mathscr{D}}_A\subset K(A)$, and $\mathscr{D}_0$ is a core for $\widetilde{A}_{\omega}$.
\end{theorem}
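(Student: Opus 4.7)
I would first verify $\mathscr{D}_0 \subset K(A) \cap \dom{\widetilde{A}_\omega}$ together with $v(t)$-invariance. For $\xi = (\xi_n)_{\omega} \in \mathscr{D}_0$ with $\xi_n \in 1_{[-a,a]}(A)H$ and $\sup_n\|\xi_n\| < \infty$, the spectral-theorem estimates $|e^{it\lambda} - e^{is\lambda}| \leq |t-s||\lambda|$ and $|t^{-1}(e^{it\lambda}-1) - i\lambda| \leq \tfrac{|t|\lambda^2}{2}$, restricted to $|\lambda| \leq a$, yield uniform-in-$n$ bounds producing $\omega$-equicontinuity and the identity $\widetilde{A}_{\omega}(\xi_n)_{\omega} = (A\xi_n)_{\omega}$; since $e^{itA}$ commutes with $1_{[-a,a]}(A)$, $\mathscr{D}_0$ is also $v(t)$-invariant. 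Next, for $\xi \in \widehat{\mathscr{D}}_A$ with proper $A$-sequence $(\xi_n)_n$, pick for each $\varepsilon > 0$ an approximation $(\eta_n)_n$ with $\eta_n \in 1_{[-a,a]}(A)H$ and $\lim_{n\to\omega}\|\xi_n-\eta_n\|_A < \varepsilon$. A triangle inequality combining the $\omega$-equicontinuity of $(e^{itA}\eta_n)_n$ with $\lim_{n\to\omega}\|\xi_n - \eta_n\| < \varepsilon$ yields $A$-regularity of $\xi$, and the elementary spectral bound $\|t^{-1}(e^{itA}-1)\zeta\| \leq \|A\zeta\|$ for $\zeta \in \dom{A}$ gives
\[
\bigl\|t^{-1}(v(t)-1)\xi - t^{-1}(v(t)-1)\eta\bigr\| \leq \lim_{n\to\omega}\|A(\xi_n-\eta_n)\| < \varepsilon
\]
uniformly in $t\ne 0$, where $\eta := (\eta_n)_{\omega} \in \mathscr{D}_0 \subset \dom{\widetilde{A}_\omega}$. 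A Cauchy argument then places $\xi$ in $\dom{\widetilde{A}_\omega}$, so $\widehat{\mathscr{D}}_A \subset K(A) \cap \dom{\widetilde{A}_\omega}$.

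\textbf{The core property.} Since $\mathscr{D}_0 \subset \dom{\widetilde{A}_\omega}$ is $v(t)$-invariant, the standard criterion (a $v(t)$-invariant subspace of $\dom{\widetilde{A}_\omega}$ that is norm-dense in $K(A)$ is a core) reduces everything to norm-density of $\mathscr{D}_0$ in $K(A)$. For a Fej\'er-type kernel $F_R \in L^1(\mathbb{R})$ with $F_R \geq 0$, $\int F_R = 1$, and Fourier transform supported in $[-R,R]$, define $\psi_n^R := \int F_R(t) e^{itA}\xi_n\,dt$; functional calculus places $\psi_n^R \in 1_{[-R,R]}(A)H$, so $(\psi_n^R)_{\omega} \in \mathscr{D}_0$. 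Splitting
\[
\|\psi_n^R - \xi_n\| \leq \int_{|t|<\delta} F_R(t)\|e^{itA}\xi_n - \xi_n\|\,dt + 2\|\xi_n\| \int_{|t|\geq\delta} F_R(t)\,dt,
\]
the $\omega$-equicontinuity of $\xi$ at $0$ controls the first integrand on some $W\in\omega$ once $\delta$ is small, while $\int_{|t|\geq\delta}F_R = O((R\delta)^{-1})$ controls the tail as $R\to\infty$. Hence $\lim_{n\to\omega}\|\psi_n^R - \xi_n\|$ can be made arbitrarily small, establishing density.

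\textbf{Reverse inclusion via diagonalization.} For $\xi \in \dom{\widetilde{A}_\omega}$, the core property supplies $\eta^{(k)} \in \mathscr{D}_0$ with $\|\xi - \eta^{(k)}\|_{\widetilde{A}_\omega} \to 0$ and, after thinning, $\|\eta^{(k+1)} - \eta^{(k)}\|_{\widetilde{A}_\omega} < 2^{-k}$. Fixing representatives $\eta_n^{(k)} \in 1_{[-a_k,a_k]}(A)H$ and putting $D := \sup_k\|\eta^{(k)}\|_{\widetilde{A}_\omega} + 1$, form the nested sets
\[
V_k := \bigl\{ n \geq k \,:\, \|\eta_n^{(j+1)} - \eta_n^{(j)}\|_A < 2^{-j}\text{ for every } j\leq k,\ \|\eta_n^{(k)}\|_A \leq D \bigr\} \in \omega.
\]
Set $m(n) := \max\{k \leq n : n \in V_k\}$ (or $0$ if none) and $\xi_n := \eta_n^{(m(n))}$ (or $0$). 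A telescoping estimate gives $\|\xi_n - \eta_n^{(K)}\|_A \leq 2^{-K+1}$ on $V_K \in \omega$ and $\|A\xi_n\| \leq D$ for every $n$, so $(\xi_n)_n$ is a proper $A$-sequence representing $\xi$; hence $\xi \in \widehat{\mathscr{D}}_A$. The main obstacle is the density of $\mathscr{D}_0$ in $K(A)$: the natural idea of using spectral cutoffs $1_{[-a,a]}(\widetilde{A}_\omega)\xi$ would require identifying these in terms of spectral cutoffs of $A$ applied to representatives, which in turn requires an interchange of Bochner integration with the ultraproduct quotient whose justification is delicate. The Fej\'er-kernel argument bypasses this entirely, working at the representative level and invoking $\omega$-equicontinuity only for the $H_\omega$-norm estimate.
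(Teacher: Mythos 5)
Your proof is correct, but for the substantive half of the theorem it takes a genuinely different route from the paper's. The inclusion $\widehat{\mathscr{D}}_A\subset \dom{\widetilde{A}_{\omega}}$ is handled essentially as in Lemma \ref{lem: hat{A}subset tilde{A}} (you replace the explicit identification of the limit of $\frac{1}{t}(v(t)-1)\xi$ by a Cauchy argument; both rest on the same two spectral estimates on $[-a,a]$). The divergence is in proving $\dom{\widetilde{A}_{\omega}}\subset\widehat{\mathscr{D}}_A$ together with the core property. The paper starts from the spectral core $\bigcup_{a>0}1_{[-a,a]}(\widetilde{A}_{\omega})K(A)$ and must convert a vector supported in a spectral subspace of $\widetilde{A}_{\omega}$ into one represented by a sequence from spectral subspaces of $A$; this uses a de la Vall\'ee Poussin kernel and hinges on the interchange of Bochner integration with the ultraproduct quotient (Lemmas \ref{lem: integration and ultralimit} and \ref{lem: smeared out function}), which is where the technical weight of the paper's argument lies. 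You instead prove norm-density of $\mathscr{D}_0$ in $K(A)$ by Fej\'er smoothing at the level of representatives --- since $\psi_n^R=\widehat{F_R}(A)\xi_n$ is an identity in $H$ for each fixed $n$, no interchange of integral and ultralimit is needed, only $\omega$-equicontinuity at $t=0$ and the concentration of $F_R$ --- and then upgrade density to the core property via the standard invariant-dense-domain criterion for generators of unitary groups; the concluding diagonalization producing a proper representative is the same as the paper's. This is a real simplification for this theorem, though Lemma \ref{lem: smeared out function} is still needed later for the resolvent identity (Lemma \ref{lem: resolvent of Atilde is UP of A}), so it cannot be dropped from the paper wholesale. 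Two small points to tidy: the sets $V_k$ as written are not literally nested, because the bound $\|\eta_n^{(k)}\|_A\le D$ is imposed only at level $k$ (harmless, since your telescoping estimate only uses $n\in V_{m(n)}$, but either do not call them nested or impose the bound for all $j\le k$); and before invoking the density criterion you should record that $\mathscr{D}_0$ is indeed a linear subspace, by taking the larger of two cut-off levels.
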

We need several lemmata. Next Lemma justifies the choice of proper $A$-sequences to consider the ultrapower.
\begin{lemma}\label{lem: hat{A}subset tilde{A}}
$\widehat{\mathscr{D}}_A\subset \dom{\widetilde{A}_{\omega}}$, and for $\xi\in \widehat{\mathscr{D}}_A$ with a proper representative $(\xi_n)_n$, we have
\[\widetilde{A}_{\omega}\xi=(A\xi_n)_{\omega}.\]
In particular, $(A\xi_n)_{\omega}=(A\xi_n')_{\omega}$ if both $(\xi_n)_n, (\xi_n')_n$ are proper $A$-sequences representing the same vector $\xi\in \widehat{\mathscr{D}}_A$. 
\end{lemma}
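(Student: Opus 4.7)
The plan is to first verify the claim on the ``spectrally bounded'' sub-class $\mathscr{D}_0$ by a direct functional-calculus estimate, and then bootstrap to arbitrary proper $A$-sequences via the closedness of $\widetilde{A}_\omega$. For the base case, suppose $(\eta_n)_n\in\ell^{\infty}(\mathbb{N},H)$ with $\eta_n\in 1_{[-a,a]}(A)H$ for some fixed $a>0$ and $\sup_n\|\eta_n\|\le C<\infty$. The spectral theorem gives, uniformly in $n$,
\[
\|e^{itA}\eta_n-e^{isA}\eta_n\|^2=\int_{[-a,a]}|e^{itx}-e^{isx}|^2\, d\mu_{\eta_n}(x)\le a^2|t-s|^2 C^2,
\]
so $\{t\mapsto e^{itA}\eta_n\}_n$ is globally equicontinuous, hence $\omega$-equicontinuous, and $\eta:=(\eta_n)_\omega\in K(A)$. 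Applying $|e^{iy}-1-iy|\le y^2/2$ inside the same spectral integral yields
\[
\Bigl\|\tfrac{1}{t}(e^{itA}-1)\eta_n-iA\eta_n\Bigr\|\le\tfrac{|t|a^2}{2}C,
\]
again uniformly in $n$, so $\tfrac{1}{t}(u(t)_\omega-1)\eta\to i(A\eta_n)_\omega$ as $t\to 0$. Hence $\eta\in\dom{\widetilde{A}_\omega}$ with $\widetilde{A}_\omega\eta=(A\eta_n)_\omega$.

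Now let $(\xi_n)_n$ be a proper $A$-sequence representing $\xi$. Applying condition $(*)$ in Definition \ref{def: proper A-sequences} with $\varepsilon=1/k$ produces, for each $k\ge 1$, a number $a_k>0$ and an $A$-sequence $(\eta_n^{(k)})_n$ with $\eta_n^{(k)}\in 1_{[-a_k,a_k]}(A)H$ and $\lim_{n\to\omega}\|\xi_n-\eta_n^{(k)}\|_A<1/k$. By the base case, $\eta^{(k)}:=(\eta_n^{(k)})_\omega$ lies in $\dom{\widetilde{A}_\omega}$ with $\widetilde{A}_\omega\eta^{(k)}=(A\eta_n^{(k)})_\omega$. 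Since the graph norm dominates both $\|\cdot\|$ and $\|A\cdot\|$, we obtain
\[
\|\xi-\eta^{(k)}\|<1/k,\qquad \|(A\xi_n)_\omega-(A\eta_n^{(k)})_\omega\|<1/k.
\]
Because $\widetilde{A}_\omega$ is self-adjoint on $K(A)$ (hence closed) and $K(A)$ is closed in $H_\omega$ by Lemma \ref{lem: K is closed}, passing $k\to\infty$ forces $\xi\in K(A)\cap\dom{\widetilde{A}_\omega}$ and $\widetilde{A}_\omega\xi=(A\xi_n)_\omega$. The final assertion is then immediate: any two proper representatives of the same $\xi$ both produce the single element $\widetilde{A}_\omega\xi$ on the right-hand side.

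The main technical point is the uniform-in-$n$ control in the base case, which crucially needs the spectral support $\eta_n\in 1_{[-a,a]}(A)H$ with a \emph{common} bound $a$ for all $n$ -- this is precisely what the definition of a proper $A$-sequence has been engineered to supply, and without it the Taylor estimate for the generator would fail to be uniform and one could not identify the infinitesimal generator at $\eta$ with $i(A\eta_n)_\omega$. Once that uniformity is secured, everything else is a routine closedness argument.
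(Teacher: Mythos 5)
Your proposal is correct, and it reorganizes the argument in a way that differs from the paper's at the second stage. The base case is essentially the paper's: the same spectral-theorem computation gives $\omega$-equicontinuity of $t\mapsto e^{itA}\eta_n$ for spectrally truncated $\eta_n\in 1_{[-a,a]}(A)H$, and your Taylor bound $|e^{iy}-1-iy|\le y^2/2$ is a tidier substitute for the paper's explicit analysis of $F(t,\lambda)=\lambda^2\bigl(2\tfrac{1-\cos(t\lambda)}{(t\lambda)^2}-2\tfrac{\sin(t\lambda)}{t\lambda}+1\bigr)$; both yield the uniform-in-$n$ identification of the difference quotient with $i(A\eta_n)_\omega$. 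Where you diverge is the passage from $\mathscr{D}_0$-type vectors to a general proper $A$-sequence: the paper never invokes closedness, but instead estimates $\bigl\|\tfrac{1}{t}(v(t)-1)\xi-(iA\xi_n)_\omega\bigr\|$ directly by a three-term triangle inequality, bounding the term $\tfrac{1}{t}(v(t)-1)(\xi-\eta)$ uniformly in $t$ by $\|(A\xi_n)_\omega-(A\eta_n)_\omega\|<\varepsilon$ via the inequality $|e^{it\lambda}-1|\le|t\lambda|$ applied to $\xi_n-\eta_n\in\dom{A}$, and concluding that the limsup is at most $2\varepsilon$. Your route instead applies condition $(*)$ with $\varepsilon=1/k$ and closes the graph, using that $\widetilde{A}_\omega$ is self-adjoint (hence closed) by Stone's theorem and that $K(A)$ is closed by Lemma \ref{lem: K is closed}. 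This is a legitimate and somewhat more economical argument: it avoids estimating the difference quotient at $\xi$ altogether, at the cost of invoking closedness of the generator (which is available for free in this setup). The graph-norm bookkeeping is right -- condition $(*)$ controls both $\|\xi-\eta^{(k)}\|$ and $\|(A\xi_n)_\omega-(A\eta_n^{(k)})_\omega\|$ -- and the final uniqueness assertion follows exactly as you say. No gaps.
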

\begin{proof}
We first show that $\widehat{\mathscr{D}}_A\subset K(A)$. Since $K(A)$ is closed and every elements in $\widehat{\mathscr{D}}_A$ can be approximated by vectors of the form $(\eta_n)_{\omega}$ where $\eta_n\in 1_{[-a,a]}(A)H (n\in \mathbb{N})$ for a fixed $a>0$, it suffices to show that $\{t\mapsto e^{itA}\eta_n\}_{n=1}^{\infty}$ is $\omega$-equicontinuous for such $(\eta_n)_{\omega}$. Let $\varepsilon>0$ and $t\in \mathbb{R}$ be given. Let $A=\int_{\mathbb{R}}\lambda de(\lambda)$ be the spectral resolution of $A$. We have 
\eqa{
\|e^{itA}\eta_n-e^{isA}\eta_n\|^2&=\int_{\mathbb{R}}|e^{i(t-s)\lambda}-1|^2d\|e(\lambda)\eta_n\|^2\\
&=2\int_{\mathbb{R}}(1-\cos ((t-s)\lambda))d\|e(\lambda)\eta_n\|^2\\
&\le \int_{[-a,a]}(t-s)^2\lambda^2d\|e(\lambda)\eta_n\|^2\\
&\le (t-s)^2a^2\|\eta_n\|^2.
}
Therefore let $\delta>0$ be such that $\delta^2a^2\sup_{n\ge 1}\|\eta_n\|^2<\varepsilon^2$. Then for each $n\in \mathbb{N}$ and $s\in (t-\delta,t+\delta)$, $\|e^{itA}\eta_n-e^{isA}\eta_n\|<\varepsilon$ holds. Therefore $(\eta_n)_{\omega}$ is $A$-regular and $\widehat{\mathscr{D}}_A\subset K(A)$ holds.\\ \\
  
Next, let $\zeta:=(iA\xi_n)_{\omega}$. We show that $\frac{1}{t}(v(t)-1)\xi$ converges to $\zeta$ as $t\to 0$. Let $\varepsilon>0$. We may find $a>0$ and $(\eta_n)_n$ satisfying conditions in (*) of Definition \ref{def: proper A-sequences}. Let $\eta=(\eta_n)_{\omega}$. Then we have
\eqa{
\left \|\frac{1}{t}(v(t)-1)\xi-\zeta\right \|&\le \left \|\frac{1}{t}(v(t)-1)(\xi-\eta) \right \|+
\left \|\frac{1}{t}(v(t)-1)\eta-(iA\eta_n)_{\omega}\right \|\\
&\hspace{2.5cm}+\left \|(iA\eta_n)_{\omega}-(iA\xi_n)_{\omega}\right \|.
}
By the condition (*), the last term satisfies $\|(iA\eta_n)_{\omega}-(iA\xi_n)_{\omega}\|<\varepsilon$.\\ 
Now estimate the first term:
\eqa{
\left \|\frac{1}{t}(v(t)-1)(\xi-\eta)\right \|^2
&=\lim_{n\to \omega}\frac{1}{t^2}\int_{\mathbb{R}}|e^{it\lambda}-1|^2d\|e(\lambda)(\xi_n-\eta_n)\|^2\\
%&=\lim_{n\to \omega}\frac{2}{t^2}\int_{\mathbb{R}}(1-\cos (t\lambda))d\|e(\lambda)(\xi_n-\eta_n)\|^2\\
&\le \lim_{n\to \omega}\frac{1}{t^2}\int_{\mathbb{R}}t^2\lambda^2d\|e(\lambda)(\xi_n-\eta_n)\|^2\\
&=\|(A\xi_n)_{\omega}-(A\eta_n)_{\omega}\|^2<\varepsilon^2.
}
Using $\eta_n\in 1_{[-a,a]}(A)H\ (n\ge 1)$, we then estimate the second term:
\eqa{
\left \|\frac{1}{t}(v(t)-1)\eta-(iA\eta_n)_{\omega}\right \|^2&=\lim_{n\to \omega}\int_{-a}^a\left |\frac{e^{it\lambda}-1}{t}-i\lambda\right |^2d\|e(\lambda)\eta_n\|^2\\
&=\lim_{n\to \omega}\int_{-a}^a\left \{\left (\frac{\cos (t\lambda)-1}{t}\right )^2+\left (\frac{\sin (t\lambda)}{t}-\lambda \right )^2\right \}d\|e(\lambda)\eta_n\|^2\\
&=\lim_{n\to \omega}\int_{-a}^aF(t,\lambda)d\|e(\lambda)\eta_n\|^2,
}
where
\[F(t,\lambda)=\lambda^2\left (2\frac{1-\cos (t\lambda)}{(t\lambda)^2}-2\frac{\sin (t\lambda)}{t\lambda}+1\right ).\]
Therefore we have for each $t$ with $|t|a<\frac{\pi}{2}$,
\eqa{
\sup_{|\lambda|\le a}F(t,\lambda)&\le 2a^2\sup_{|\lambda|\le a}\left (1-\frac{\sin (t\lambda)}{t\lambda}\right )\\
&=2a^2\sup_{|x|\le |t|a}\left (1-\frac{\sin x}{x}\right )\\
&=2a^2\left (1-\frac{\sin (ta)}{ta}\right ).
}
Therefore for $|t|<\frac{\pi}{2a}$, 
\eqa{
\lim_{n\to \omega}\int_{-a}^aF(t,\lambda)d\|e(\lambda)\eta_n\|^2&\le \lim_{n\to \omega}\int_{-a}^{a}2a^2\left (1-\frac{\sin (ta)}{ta}\right )d\|e(\lambda)\eta_n\|^2\\
&=2a^2\left (1-\frac{\sin (ta)}{ta}\right )\|(\eta_n)_{\omega}\|^2\\
&\stackrel{t\to 0}{\to 0}.
}
Therefore we have
\[\overline{\lim_{t\to 0}}\left \|\frac{1}{t}(v(t)-1)\eta-(iA\eta_n)_{\omega}\right \|\le 2\varepsilon.\]
Since $\varepsilon>0$ is arbitrary, the claim is proved.
\end{proof}

Now we show that the order of integration and ultralimit can be interchanged for $\omega$-equicontinuous family  $\{F_n\colon \mathbb{R}\to H\}_{n=1}^{\infty}$ under some additional conditions.
\begin{lemma}\label{lem: integration and ultralimit}
Let $F_n\in C(\mathbb{R},H)\cap L^1(\mathbb{R},H)\ (n\in \mathbb{N})$ be a family of $H$-valued $\omega$-equicontinuous maps satisfying the following two conditions:
\begin{align}
\int_{\mathbb{R}}\sup_{n\ge 1}\|F_n(t)\|dt<\infty,\ \ \  &\sup_{n\ge 1}\|F_n(t)\|<\infty \ \ \ (t\in \mathbb{R}).\label{eq: integrability of F_omega}\\
\lim_{a\to \infty}\lim_{n\to \omega}&\int_{\mathbb{R}\setminus [-a,a]}\|F_n(t)\|dt=0 \label{eq: regularity of F_n}
\end{align}
Then we have
\[\left (\int_{\mathbb{R}}F_n(t)dt\right )_{\omega}=\int_{\mathbb{R}}(F_n(t))_{\omega}dt.\] 
\end{lemma}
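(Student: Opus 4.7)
The plan is to reduce the identity to a compact interval via the tail hypotheses and then compare both sides to a common Riemann sum. First observe that both sides are well defined: the sequence $(\int_{\mathbb{R}} F_n(t)\,dt)_n$ is bounded in $H$ by $\int_{\mathbb{R}} \sup_m \|F_m(t)\|\,dt$, so its ultrapower class makes sense; and the argument of Lemma \ref{lem: omega-equicontinuity is well-defined} shows that $t \mapsto (F_n(t))_{\omega}$ is continuous, while the pointwise bound $\|(F_n(t))_{\omega}\| \le \sup_m \|F_m(t)\|$ together with \eqref{eq: integrability of F_omega} makes it Bochner integrable into $H_{\omega}$.

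Given $\eta > 0$, I would pick $a > 0$ large enough that $\lim_{n\to\omega} \int_{|t|>a} \|F_n(t)\|\,dt < \eta/4$ (using \eqref{eq: regularity of F_n}) and simultaneously $\int_{|t|>a} \sup_m \|F_m(t)\|\,dt < \eta/4$ (using \eqref{eq: integrability of F_omega}); the first bound controls the $H$-tail on the left-hand side after taking the ultralimit, the second the $H_{\omega}$-tail on the right-hand side. Now set $\varepsilon := \eta/(8a)$. By $\omega$-equicontinuity, for every $t \in [-a,a]$ there exist $\delta_t > 0$ and $W_t \in \omega$ such that $\|F_n(s) - F_n(t)\| < \varepsilon$ whenever $n \in W_t$ and $|s-t| < \delta_t$. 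Cover $[-a,a]$ by finitely many intervals $(t_i - \delta_{t_i}, t_i + \delta_{t_i})$, $i = 1,\ldots,N$, set $W := \bigcap_{i=1}^{N} W_{t_i} \in \omega$, and choose a partition $-a = s_0 < s_1 < \cdots < s_M = a$ such that each $[s_k, s_{k+1}]$ lies in some $(t_{i_k} - \delta_{t_{i_k}}, t_{i_k} + \delta_{t_{i_k}})$.

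For $n \in W$ the Riemann sum $S_n := \sum_{k} F_n(t_{i_k})(s_{k+1} - s_k)$ satisfies $\|S_n - \int_{-a}^{a} F_n(t)\,dt\| \le 2a\varepsilon = \eta/4$, and since the sum is finite, $(S_n)_{\omega} = \sum_{k} (F_n(t_{i_k}))_{\omega}(s_{k+1} - s_k)$; passing the same estimate to $H_{\omega}$ via $\|(F_n(s))_{\omega} - (F_n(t_{i_k}))_{\omega}\| \le \varepsilon$ for $s \in [s_k, s_{k+1}]$ gives $\|(S_n)_{\omega} - \int_{-a}^{a} (F_n(t))_{\omega}\,dt\| \le 2a\varepsilon = \eta/4$. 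Summing the four error contributions yields $\|(\int_{\mathbb{R}} F_n(t)\,dt)_{\omega} - \int_{\mathbb{R}} (F_n(t))_{\omega}\,dt\| < \eta$, and since $\eta$ was arbitrary the identity follows. The delicate point is that $\omega$-equicontinuity supplies only pointwise witness sets $W_t$ depending on $t$; compactness of $[-a,a]$ is what permits intersection of finitely many of them into a single $W \in \omega$ and hence the simultaneous Riemann approximation on both sides.
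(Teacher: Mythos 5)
Your proposal is correct and follows essentially the same route as the paper: reduce to a compact interval $[-a,a]$ via the two tail hypotheses, then use $\omega$-equicontinuity to produce a single partition and a single set $W\in\omega$ so that both $\left(\int_{-a}^a F_n(t)\,dt\right)_{\omega}$ and $\int_{-a}^a (F_n(t))_{\omega}\,dt$ are within $\varepsilon/2$ of the same Riemann sum. Your explicit compactness argument for extracting one $W$ from the pointwise witness sets $W_t$ is a detail the paper leaves implicit, but the underlying argument is identical.
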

\begin{remark}
Note that by the $\omega$-equicontinuity of $\{F_n\}_{n=1}^{\infty}$, $t\mapsto (F_n(t))_{\omega}$ is continuous. In particular, it is measurable.
\end{remark}
\begin{proof} 
By Eq. (\ref{eq: integrability of F_omega}), we have 
\[\int_{\mathbb{R}}(F_n(t))_{\omega}dt=\lim_{a\to \infty}\int_{-a}^a(F_n(t))_{\omega}dt.\]
By Eq. (\ref{eq: regularity of F_n}), we also have
\[\left (\int_{\mathbb{R}}F_n(t)dt\right )_{\omega}=\lim_{a\to \infty}\left (\int_{-a}^a F_n(t)dt\right )_{\omega}.\]
Therefore we have only to show $\int_{-a}^a(F_n(t))_{\omega}dt=\left (\int_{-a}^a F_n(t)dt\right )_{\omega}$ for all $a>0$. By the $\omega$-equicontinuity of $\{F_n\}_{n=1}^{\infty}$, there exists a partition $\Delta: t_0=-a<t_1<t_2<\cdots <t_N=a$ of the interval $[-a,a]$ and $W\in \omega$ so that for each $0\le i\le N-1$, $n\in W$ and $\alpha, \beta \in [t_i,t_{i+1}]$, we have 
\[\|F_n(\alpha)-F_n(\beta)\|<\varepsilon/4a.\]
This in particular implies that $\|(F_n(\alpha))_{\omega}-(F_n(\beta))_{\omega}\|<\varepsilon/4a$. Therefore by the definition of the Riemann integral, we have 
\[\left \| \sum_{i=0}^{N-1}(t_{i+1}-t_i)F_n(t_i)-\int_{-a}^aF_n(t)dt\right \| <\varepsilon/2, \ \ \ \ (n\in W),\]
and
\[\left \| \sum_{i=0}^{N-1}(t_{i+1}-t_i)(F_n(t_i))_{\omega}-\int_{-a}^a(F_n(t))_{\omega}dt\right \| <\varepsilon/2.\]
Using $\left (\sum_{i=0}^{N-1}(t_{i+1}-t_i)F_n(t_i)\right )_{\omega}=\sum_{i=0}^{N-1}(t_{i+1}-t_i)(F_n(t_i))_{\omega}$, we have
\[\left \| \int_{-a}^a(F_n(t))_{\omega}dt-\left (\int_{-a}^aF_n(t)dt\right )_{\omega}\right \|<\varepsilon.\]
Since $\varepsilon>0$ is arbitrary, the claim is proved.  
\end{proof}

\begin{lemma}\label{lem: smeared out function}
Let $\xi=(\xi_n)_{\omega}\in K(A)$, and let $f\in L^1(\mathbb{R})$. Then we have 
\[\left (\int_{\mathbb{R}}f(t)e^{itA}\xi_ndt\right )_{\omega}=\int_{\mathbb{R}}(f(t)e^{itA}\xi_n)_{\omega}dt.\]
\end{lemma}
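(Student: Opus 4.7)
The plan is to apply Lemma \ref{lem: integration and ultralimit} to the family $F_n(t) := f(t)e^{itA}\xi_n$, after first reducing to the case that $f$ is continuous of compact support. Let $M := \sup_{n\ge 1}\|\xi_n\| < \infty$. I would begin by noting that both sides of the claimed identity depend linearly and boundedly (as maps $L^1(\mathbb{R}) \to H_{\omega}$) on $f$: the left-hand side is bounded by $M\|f\|_1$ since $\|\int f(t)e^{itA}\xi_n\,dt\| \le \|\xi_n\|\|f\|_1$, while the right-hand side equals $\int f(t)\,u(t)_{\omega}\xi\,dt$, with $t \mapsto u(t)_{\omega}\xi = (e^{itA}\xi_n)_{\omega}$ norm-continuous (by Lemma \ref{lem: omega-equicontinuity is well-defined}, using $\xi \in K(A)$) and norm-bounded by $\|\xi\|$. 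Since $C_c(\mathbb{R})$ is dense in $L^1(\mathbb{R})$, it therefore suffices to prove the identity for $f \in C_c(\mathbb{R})$.

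For such $f$, with $\mathrm{supp}(f) \subset [-b,b]$, I would then verify the hypotheses of Lemma \ref{lem: integration and ultralimit} for $F_n(t) = f(t)e^{itA}\xi_n$. Continuity of $F_n$ and membership in $L^1(\mathbb{R},H)$ are immediate; the majorant $\sup_n\|F_n(t)\| \le M|f(t)|$ lies in $L^1(\mathbb{R})$ and is pointwise finite; and the support of $F_n$ is contained in $[-b,b]$, so the tail condition \eqref{eq: regularity of F_n} is trivially satisfied once $a \ge b$. The only step requiring actual work is the $\omega$-equicontinuity of $\{F_n\}$: given $t_0 \in \mathbb{R}$ and $\varepsilon > 0$, the uniform continuity of $f$ produces $\delta_1 > 0$ with $|f(t)-f(t_0)| < \varepsilon/(2M)$ for $|t-t_0| < \delta_1$, while the $\omega$-equicontinuity of $\{t \mapsto e^{itA}\xi_n\}_{n=1}^{\infty}$ (which holds because $\xi \in K(A)$) produces $\delta_2 > 0$ and $W \in \omega$ with $\|e^{itA}\xi_n - e^{it_0A}\xi_n\| < \varepsilon/(2|f(t_0)|+1)$ for $|t-t_0| < \delta_2$ and $n \in W$. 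A triangle inequality applied to $f(t)e^{itA}\xi_n - f(t_0)e^{it_0A}\xi_n$ then yields $\|F_n(t) - F_n(t_0)\| < \varepsilon$ for $|t-t_0| < \min(\delta_1,\delta_2)$ and $n \in W$, as required.

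Applying Lemma \ref{lem: integration and ultralimit} to this $\{F_n\}$ gives the identity for every $f \in C_c(\mathbb{R})$, and the density reduction of the first paragraph extends it to arbitrary $f \in L^1(\mathbb{R})$. The only substantive step is the $\omega$-equicontinuity check, which is precisely where the hypothesis $\xi \in K(A)$ enters; the remainder is routine bookkeeping and an appeal to the previously established integration-ultralimit interchange.
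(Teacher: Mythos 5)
Your proposal is correct and follows essentially the same route as the paper: reduce by an $L^1$-density/boundedness argument to continuous $f$, then verify the hypotheses of Lemma \ref{lem: integration and ultralimit} for $F_n(t)=f(t)e^{itA}\xi_n$, the only substantive point being the $\omega$-equicontinuity of $\{F_n\}$, which you check exactly as the paper does. The sole (immaterial) difference is that you approximate by $C_c(\mathbb{R})$ rather than $L^1(\mathbb{R})\cap C(\mathbb{R})$, which makes the tail condition \eqref{eq: regularity of F_n} trivial instead of a one-line computation.
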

\begin{proof}Note that $t\mapsto f(t)(e^{itA}\xi_n)_{\omega}$ is measurable thanks to Lemma \ref{lem: omega-equicontinuity is well-defined}. Let $C:=\sup_n\|\xi_n\|$. 
First assume that $f\in L^1(\mathbb{R})\cap C(\mathbb{R})$. 
It suffices to show that $\{F_n:t\mapsto f(t)e^{itA}\xi_n\}_{n=1}^{\infty}$ is $\omega$-equicontinuous and satisfies Eqs. (\ref{eq: integrability of F_omega}) and (\ref{eq: regularity of F_n}) in Lemma \ref{lem: integration and ultralimit}. It holds that
$\sup_n \int_{\mathbb{R}}\|F_n(t)\|dt=\int_{\mathbb{R}}|f(t)|dt\cdot \|\xi\|<\infty$, $\sup_n\|F_n(t)\|=|f(t)|<\infty$, and 
\[\lim_{a\to \infty}\lim_{n\to \omega}\int_{\mathbb{R}\setminus [-a,a]}\|F_n(t)\|dt=\lim_{a\to \infty}\int_{\mathbb{R}\setminus [-a,a]}|f(t)|dt\cdot \|\xi\|=0.\]
Therefore Eqs. (\ref{eq: integrability of F_omega}) and (\ref{eq: regularity of F_n}) in Lemma \ref{lem: integration and ultralimit} are satisfied. We show the $\omega$-equicontinuity of $\{F_n\}_{n=1}^{\infty}$. Suppose $\varepsilon>0$ and $t\in \mathbb{R}$ are given. By the $A$-regularity of $\xi$ and by the continuity of $f$, there exists $\delta>0$ and $W\in \omega$ such that for each $s\in (t-\delta,t+\delta)$ and $n\in W$, we have 
\[\|e^{itA}\xi_n-e^{isA}\xi_n\|<\frac{\varepsilon}{2(|f(t)|+1)},\ \ \ |f(t)-f(s)|<\frac{\varepsilon}{2(C+1)}.\]
It then follows that
\eqa{
\|f(t)e^{itA}\xi_n-f(s)e^{isA}\xi_n\|&\le |f(t)|\cdot \|e^{itA}\xi_n-e^{isA}\xi_n\|+|f(t)-f(s)|\cdot \|e^{isA}\xi_n\|\\
&<\varepsilon/2+\varepsilon/2=\varepsilon.
}
Therefore $\{F_n\}_{n=1}^{\infty}$ is $\omega$-equicontinuous. By Lemma \ref{lem: integration and ultralimit}, the claim follows.\\ \\
Next, suppose $f\in L^1(\mathbb{R})$. Let $\varepsilon>0$. There exists $g\in L^1(\mathbb{R})\cap C(\mathbb{R})$ such that $\|f-g\|_1<\varepsilon/2(C+1)$. Then we have 
\eqa{
\left \|\left (\int_{\mathbb{R}}f(t)e^{itA}\xi_ndt-\int_{\mathbb{R}}g(t)e^{itA}\xi_ndt\right )_{\omega}\right \|&\le \lim_{n\to \omega}\int_{\mathbb{R}}|f(t)-g(t)\||\xi_n\|dt\\
&< \varepsilon/2,\\
\left \|\int_{\mathbb{R}}(g(t)e^{itA}\xi_n)_{\omega}dt-\int_{\mathbb{R}}(f(t)e^{itA}\xi_n)_{\omega}dt\right \|&\le \|g-f\|_1\cdot \|\xi\|<\varepsilon/2, 
}
whence by applying the above argument for $g$ we have 
\[\left \|\left (\int_{\mathbb{R}}f(t)e^{itA}\xi_ndt\right )_{\omega}-\int_{\mathbb{R}}(f(t)e^{itA}\xi_n)_{\omega}dt\right \|<\varepsilon.\]
Since $\varepsilon>0$ is arbitrary, the claim is proved.
\end{proof}

\begin{lemma}\label{lem: D(tildeA)subset D(hatA)}
$\dom{\widetilde{A}_{\omega}}=\widehat{\mathscr{D}}_A$. 
\end{lemma}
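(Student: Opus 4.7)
The direction $\widehat{\mathscr{D}}_A \subset \dom{\widetilde{A}_\omega}$ is already established in Lemma \ref{lem: hat{A}subset tilde{A}}, so my plan is to prove the reverse inclusion: every $\xi \in \dom{\widetilde{A}_\omega}$ admits a proper $A$-sequence representative. The overall strategy is to approximate $\xi$ in the graph norm of $\widetilde{A}_\omega$ by vectors in $\mathscr{D}_0$ produced by smearing with compactly supported cutoffs, and then to diagonalize along $\omega$ to assemble a single proper $A$-sequence that represents $\xi$ exactly.

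For the smearing, I fix $\phi \in C_c^\infty(\mathbb{R})$ with $\phi \equiv 1$ on $[-1,1]$ and $\mathrm{supp}\,\phi \subset [-c,c]$, and set $\phi_k(\lambda) := \phi(\lambda/k)$. Since $\phi_k = \hat{f_k}$ for a Schwartz (hence $L^1$) function $f_k$, Lemma \ref{lem: smeared out function} applied to any bounded representative $(\xi_n^0)_n$ of $\xi$ yields $\phi_k(\widetilde{A}_\omega)\xi = (\phi_k(A)\xi_n^0)_\omega$. Each component $\phi_k(A)\xi_n^0$ lies in $1_{[-ck,ck]}(A)H \subset \dom{A}$, so $\xi^{(k)} := \phi_k(\widetilde{A}_\omega)\xi$ belongs to $\mathscr{D}_0 \subset \widehat{\mathscr{D}}_A$, and by Lemma \ref{lem: hat{A}subset tilde{A}}, $\widetilde{A}_\omega \xi^{(k)} = (A\phi_k(A)\xi_n^0)_\omega$. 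The spectral theorem applied to $\widetilde{A}_\omega$, combined with $\phi_k \to 1$ pointwise and $|\phi_k|\le \|\phi\|_\infty$, gives $\xi^{(k)}\to \xi$ and $\widetilde{A}_\omega\xi^{(k)} = \phi_k(\widetilde{A}_\omega)\widetilde{A}_\omega\xi \to \widetilde{A}_\omega\xi$ as $k\to\infty$, i.e.\ graph-norm convergence.

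Now I diagonalize. Choose $k_m$ with $\|\xi-\xi^{(k_m)}\|_{\widetilde{A}_\omega}< 2^{-m}$ (graph norm of $\widetilde{A}_\omega$), and write $\eta_n^{(m)} := \phi_{k_m}(A)\xi_n^0$, so that $(\eta_n^{(m)})_\omega = \xi^{(k_m)}$ and $(A\eta_n^{(m)})_\omega = \widetilde{A}_\omega \xi^{(k_m)}$. Pick a decreasing chain $W_1 \supset W_2 \supset \cdots$ in $\omega$ along which $\|\eta_n^{(m)}-\eta_n^{(m-1)}\|_A < 2^{-m+2}$ for $n\in W_m$. The standard ultrafilter diagonal trick produces $m\colon \mathbb{N}\to \mathbb{N}$ with $m(n)\to\infty$ along $\omega$ and $n\in W_{m(n)}$ on an $\omega$-large set; I set $\xi_n := \eta_n^{(m(n))}$. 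Telescoping the $\|\cdot\|_A$-estimates on $W_{m(n)}$ gives $\|\xi_n - \eta_n^{(M)}\|_A \le 2^{-M+3}$ whenever $m(n)\ge M$, which delivers at once: (i) $(\xi_n)_\omega = \xi$ (taking $M$ large), (ii) $\sup_n \|A\xi_n\| < \infty$ (taking $M=1$ together with the trivial bound $\|A\eta_n^{(1)}\|\le \|A\phi_{k_1}(A)\|\cdot\|\xi_n^0\|$), and (iii) condition (*) of Definition \ref{def: proper A-sequences} with $a=ck_M$ and witness sequence $(\eta_n^{(M)})_n \subset 1_{[-ck_M,ck_M]}(A)H$ for any scale $\varepsilon\gtrsim 2^{-M+3}$. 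Hence $(\xi_n)_n$ is a proper $A$-sequence representing $\xi$, so $\xi\in \widehat{\mathscr{D}}_A$.

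The main obstacle I anticipate is the careful bookkeeping of the diagonalization: one must verify that a single telescoped $\|\cdot\|_A$-bound simultaneously (a) makes the assembled sequence represent $\xi$, (b) keeps the $A$-image uniformly bounded in $n$, and (c) provides, at every scale $\varepsilon$, a companion sequence sitting in a single finite spectral band of $A$ — the three requirements built into the definition of a proper $A$-sequence.
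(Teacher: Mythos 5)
Your argument is correct, and its second half (the diagonalization over a decreasing chain of $\omega$-large sets, telescoping the graph-norm estimates, and discarding an $\omega$-small exceptional set to keep $(A\xi_n)_n$ bounded) is essentially identical to the paper's. Where you genuinely diverge is in the approximation step. The paper first approximates $\xi$ in the graph norm by some $\eta\in 1_{[-a/2,a/2]}(\widetilde{A}_{\omega})K(A)$, using that $\bigcup_{a>0}1_{[-a,a]}(\widetilde{A}_{\omega})K(A)$ is a core for $\widetilde{A}_{\omega}$, and only then invokes Lemma \ref{lem: smeared out function} with a de la Vall\'ee Poussin-type kernel $f$ (with $\hat f=1$ on $[-a/2,a/2]$, $\mathrm{supp}\,\hat f\subset[-a,a]$) to show that this \emph{fixed} $\eta$ equals $(\hat f(A)\eta_n)_{\omega}$ with componentwise spectral support in $[-a,a]$; the point of the kernel there is that $\hat f(\widetilde{A}_{\omega})\eta=\eta$ exactly. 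You instead apply the cutoff $\phi_k=\hat{f_k}$ directly to (a representative of) $\xi$ itself, using Lemma \ref{lem: smeared out function} to get $\phi_k(\widetilde{A}_{\omega})\xi=(\phi_k(A)\xi_n^0)_{\omega}$, and then let dominated convergence in the spectral integral of $\widetilde{A}_{\omega}$ deliver graph-norm convergence $\phi_k(\widetilde{A}_{\omega})\xi\to\xi$. This removes the intermediate appeal to the core of $\widetilde{A}_{\omega}$ and replaces the "exact reproduction" role of the kernel by an asymptotic one; the price is that you must separately note $\widetilde{A}_{\omega}\phi_k(\widetilde{A}_{\omega})\xi=\phi_k(\widetilde{A}_{\omega})\widetilde{A}_{\omega}\xi$ and that $(\phi_k(A)\xi_n^0)_n$ is proper (so that Lemma \ref{lem: hat{A}subset tilde{A}} identifies $\widetilde{A}_{\omega}\xi^{(k)}$ with $(A\phi_k(A)\xi_n^0)_{\omega}$), both of which you do. The two routes are of comparable length; yours is arguably a little more self-contained, the paper's makes the spectral localization of the approximants more explicit from the start. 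One bookkeeping point to make explicit in a final write-up: the telescoped bound $\|\xi_n-\eta_n^{(M)}\|_A\lesssim 2^{-M}$ only holds for $n$ in an $\omega$-large set, so (as in the paper) you should redefine $\xi_n$ on the complementary $\omega$-null set to secure $\sup_n\|A\xi_n\|<\infty$ for \emph{all} $n$, as required by Definition \ref{def: proper A-sequences}.
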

\begin{proof}
By Lemma \ref{lem: hat{A}subset tilde{A}}, it suffices to show that $\dom{\widetilde{A}_{\omega}}\subset \widehat{\mathscr{D}}_A$.
Let $e(\cdot)\ (\text{resp.}\ \widetilde{e}(\cdot))$ be the spectral measure associated with $A\ (\text{resp.}\ \widetilde{A}_{\omega})$. We first show the following:\\ \\
\textbf{Claim.} For a given $\xi \in \dom{\widetilde{A}_{\omega}}$ and $\varepsilon>0$, there exists $a>0$ and $(\eta_n)_n\in \ell^{\infty}(\mathbb{N},H)$ with the properties: $\eta_n\in 1_{[-a,a]}(A)H\ (n\in \mathbb{N})$, \ $\|\xi-(\eta_n)_{\omega}\|<\varepsilon$ and $\|\widetilde{A}_{\omega}\xi-(A\eta_n)_{\omega}\|<\varepsilon$.\\ \\
We note that in general $\widetilde{e}(B)$ is not the ultrapower of $e(B)$ for a Borel set $B$. Therefore we need some extra work (cf. \cite[$\S$4]{AH}). As $\bigcup_{a>0}1_{[-a,a]}(\widetilde{A}_{\omega})K(A)$ is a core for $\widetilde{A}_{\omega}$, there exists $a>0$,  $\eta=(\eta_n)_{\omega}\in 1_{[-\frac{a}{2},\frac{a}{2}]}(\widetilde{A}_{\omega})K(A)$ such that $\|\xi-\eta\|<\varepsilon$ and $\|\widetilde{A}_{\omega}\xi-\widetilde{A}_{\omega}\eta\|<\varepsilon$. Let $f\in L^1(\mathbb{R})$ be a function with the following properties: $\text{supp}(\hat{f})\subset [-a,a]$, $\hat{f}=1$ on $[-\frac{a}{2},\frac{a}{2}]$, $0\le \hat{f}(\lambda)\le 1\ (\lambda \in \mathbb{R})$. Here, $\hat{f}(\lambda)=\int_{\mathbb{R}}e^{i\lambda t}f(t)dt$ is the Fourier transform of $f$. For instance, one may choose the de la Vall\'ee Poussin kernel $D_{a/2}$ (see \cite[Definition 4.12]{AH}).
Let 
\[\eta':=\int_{\mathbb{R}}f(t)e^{it\widetilde{A}_{\omega}}\eta dt.\]
We then have (by the spectral condition of $\eta$ and $\hat{f}=1$ on $[-a/2,a/2]$)
\eqa{
\eta'&=\int_{\mathbb{R}}\int_{\mathbb{R}}f(t)e^{it\lambda}d(\widetilde{e}(\lambda)\eta)dt=\int_{\mathbb{R}}\left (\int_{\mathbb{R}}f(t)e^{it\lambda}dt\right )d(\widetilde{e}(\lambda)\eta)\\
&=\int_{\mathbb{R}}\hat{f}(\lambda)d(\widetilde{e}(\lambda)\eta)=\hat{f}(\widetilde{A}_{\omega})\eta\\
&=\eta.
}
Furthermore, by Lemma \ref{lem: smeared out function}, we have
\[\eta=\eta'=\left (\int_{\mathbb{R}}f(t)e^{itA}\eta_ndt\right )_{\omega}=(\hat{f}(A)\eta_n)_{\omega},\]
and $\eta_n':=\hat{f}(A)\eta_n\in 1_{[-a,a]}(A)H$ for each $n\ge 1$. Therefore $(\eta_n')_n$ is the required sequence, as $\widetilde{A}_{\omega}\eta=(A\eta_n')_{\omega}$ (cf. Lemma \ref{lem: hat{A}subset tilde{A}}). \\ \\
Assume now that $\xi \in \dom{\widetilde{A}_{\omega}}$ with $\|\xi\|=1$. We show that $\xi\in \widehat{\mathscr{D}}_A$, i.e., it has a proper representative. Let $\varepsilon>0$. We use the following similar  argument to \cite[ Lemma 3.9 (i)]{AH}. By the above Claim, for each $k\in \mathbb{N}$, put $\varepsilon=2^{-k-1}$ in the above argument to find $a_k(\le a_{k+1}\le a_{k+2}\le \cdots)$ and $(\eta^{(k)}_n)_n\in \ell^{\infty}(\mathbb{N},H)$ satisfying $\eta_n^{(k)}\in 1_{[-a_k,a_k]}(A)H\ (n\in \mathbb{N})$, and  
\[\|\xi-(\eta^{(k)}_n)_{\omega}\|<\frac{1}{2^{k+1}},\ \ \ \|\widetilde{A}_{\omega}\xi-(A\eta^{(k)}_n)_{\omega}\|<\frac{1}{2^{k+1}},\ \ \ \ \ k\in \mathbb{N}.\]
Furthermore, we may assume $\|\eta_n^{(k)}\|\le 2$ for each $n,k\in \mathbb{N}$.
Then for each $k\in \mathbb{N}$, we have
\[
\|(\eta_n^{(k+1)})_{\omega}-(\eta_n^{(k)})_{\omega}\|<\frac{1}{2^k},\ \ \ \|(A\eta^{(k+1)}_n)_{\omega}-(A\eta^{(k)}_n)_{\omega}\|<\frac{1}{2^k}.\]
Let 
\[G_k:=\left \{n\in \mathbb{N}; \|\eta_n^{(k+1)}-\eta_n^{(k)}\|<\frac{1}{2^k},\ \ \|A\eta^{(k+1)}_n-A\eta^{(k)}_n\|<\frac{1}{2^k}\right \},\ \ k\in \mathbb{N}.\]
Then $G_k\in \omega\ (k\in \mathbb{N})$ holds, and since $\omega$ is free, $F_k:=\bigcap_{i=1}^kG_i\cap \{n\in \mathbb{N};n\ge k\}\in \omega \ (k\in \mathbb{N})$. Since $\{F_k\}_{k=1}^{\infty}$ is decreasing with empty intersection, $\mathbb{N}=(\mathbb{N}\setminus F_1)\sqcup \bigsqcup_{j=1}^{\infty}(F_j\setminus F_{j+1})$. Then define $(\xi_n)_n$ by
\[\xi_n:=\begin{cases}\eta_n^{(1)} & (n\in \mathbb{N}\setminus F_1)\\
\eta_n^{(k)} & (n\in F_k\setminus F_{k+1})\end{cases}.\]
Then $\sup_{n\ge 1}\|\xi_n\|\le 2<\infty$. Fix $k\ge 1$. If $n\in F_k=\bigsqcup_{j=k}^{\infty}(F_j\setminus F_{j+1})$, there is unique $j\ge k$ for which $n\in F_j\setminus F_{j+1}$ holds, so that $\xi_n=\eta_n^{(j)}$. Then we have
\eqa{
\|\xi_n-\eta_n^{(k)}\|&=\|\eta_n^{(j)}-\eta_n^{(k)}\|\le \sum_{i=k}^{j-1}\|\eta_n^{(i+1)}-\eta_n^{(i)}\|\\
&\le \sum_{i=k}^{j-1}\frac{1}{2^{i}}<\frac{1}{2^{k-1}},
}
so that $F_k\in \omega$ implies 
\[\|(\xi_n)_{\omega}-(\eta_n^{(k)})_{\omega}\|<\frac{1}{2^{k-1}},\ \ \ \ k\in \mathbb{N}.\]
Similarly, 
\[\|(A\xi_n)_{\omega}-(A\eta_n^{(k)})_{\omega}\|<\frac{1}{2^{k-1}},\ \ \ k\in \mathbb{N}.\]     
In particular, for each $k\in \mathbb{N}$ we have
\[\|\xi-(\xi_n)_{\omega}\|\le \|\xi-(\eta_n^{(k)})_{\omega}\|+\|(\eta_n^{(k)})_{\omega}-(\xi_n)_{\omega}\|<\frac{1}{2^{k-2}}.\]
Letting $k\to \infty$, we obtain $\xi=(\xi_n)_{\omega}$. We show that $(\xi_n)_n$ is a proper $A$-sequence. Suppose $\varepsilon>0$ is given. Take $k$ such that $\varepsilon>2^{-k+1}$, and put $a=a_k>0, \eta_n:=\eta_n^{(k)}$. Then by construction, $\eta_n\in 1_{[-a,a]}(A)H\ (n\in \mathbb{N})$, 
$\|(\xi_n)_{\omega}-(\eta_n)_{\omega}\|<\varepsilon$ and $\|(A\xi_n)_{\omega}-(A\eta_n)_{\omega}\|<\varepsilon$ holds. Therefore changing $\xi_n$ to be 0 if necessary for $n$ belonging to a set $I$ with $I \notin \omega$, we may assume that $(A\xi_n)_n$ is bounded, and $(\xi_n)_n$ is a proper $A$-sequence. This finishes the proof. 
\end{proof} 

\begin{lemma}\label{lem: resolvent of Atilde is UP of A}
Let $(\xi_n)_n\in \ell^{\infty}(\mathbb{N},H)$ be a seqence such that $\xi=(\xi_n)_{\omega} \in K(A)$. Then $(\widetilde{A}_{\omega}-i)^{-1}\xi=((A-i)^{-1}\xi_n)_{\omega}$ and $(\widetilde{A}_{\omega}+i)^{-1}\xi=((A+i)^{-1}\xi_n)_{\omega}$. 
\end{lemma}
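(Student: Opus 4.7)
The plan is to write the resolvents $(A \pm i)^{-1}$ as Laplace-type integrals against the unitary group $e^{itA}$ with $L^{1}(\mathbb{R})$ weights, then pass the ultraproduct inside the integral via Lemma \ref{lem: smeared out function}. The same scalar identity, applied spectrally to the self-adjoint operator $\widetilde{A}_{\omega}$ on $K(A)$ whose unitary group is $v(t)$, will match the result on the other side.

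Concretely, a direct computation gives, for every $\lambda\in\mathbb{R}$,
\[\frac{1}{\lambda-i}=i\int_{-\infty}^{0}e^{t}\,e^{it\lambda}\,dt,\qquad \frac{1}{\lambda+i}=-i\int_{0}^{\infty}e^{-t}\,e^{it\lambda}\,dt.\]
Setting $g_{-}(t):=i\,e^{t}\,\chi_{(-\infty,0]}(t)$ and $g_{+}(t):=-i\,e^{-t}\,\chi_{[0,\infty)}(t)$, both of which lie in $L^{1}(\mathbb{R})$, the functional calculus for $A$ and for $\widetilde{A}_{\omega}$ (with Fubini applied to the respective spectral measure) yield
\[(A\mp i)^{-1}\xi_{n}=\int_{\mathbb{R}}g_{\mp}(t)\,e^{itA}\xi_{n}\,dt,\qquad (\widetilde{A}_{\omega}\mp i)^{-1}\xi=\int_{\mathbb{R}}g_{\mp}(t)\,v(t)\xi\,dt.\]
Since $\xi\in K(A)$, I would then apply Lemma \ref{lem: smeared out function} to $g_{\mp}$ to obtain
\[\bigl((A\mp i)^{-1}\xi_{n}\bigr)_{\omega}=\int_{\mathbb{R}}g_{\mp}(t)\,(e^{itA}\xi_{n})_{\omega}\,dt=\int_{\mathbb{R}}g_{\mp}(t)\,v(t)\xi\,dt=(\widetilde{A}_{\omega}\mp i)^{-1}\xi,\]
which is both claims.

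Essentially all the substantive work is absorbed into Lemma \ref{lem: smeared out function}; the only thing to check beyond applying it is that the weights $g_{\pm}$ are in $L^{1}(\mathbb{R})$, which is immediate from the exponential factor $e^{-|t|}$ in the Laplace representation. I do not foresee any further obstacle in the argument.
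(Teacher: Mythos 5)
Your proposal is correct and follows essentially the same route as the paper: both write $(\widetilde{A}_{\omega}\mp i)^{-1}$ as a Laplace-type integral of the unitary group against an $L^{1}$ weight and then invoke Lemma \ref{lem: smeared out function} to exchange the integral with the ultraproduct. The only difference is cosmetic (you parametrize the integral over a half-line in $t<0$ where the paper substitutes $t\mapsto -t$), so there is nothing further to add.
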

\begin{proof}
Since $v(t)=e^{it\widetilde{A}}=(e^{itA})_{\omega}|_{K(A)}\ (t\in \mathbb{R})$, by the resolvent formula and by Lemma \ref{lem: smeared out function}, we have
\eqa{
(\widetilde{A}_{\omega}-i)^{-1}\xi&=i\int_0^{\infty}e^{-t}e^{-it\widetilde{A}_{\omega}}\xi dt=i\int_0^{\infty}e^{-t}(e^{-itA}\xi_n)_{\omega}dt\\
&=\left (i\int_0^{\infty}e^{-t}e^{-itA}\xi_ndt\right )_{\omega}=((A-i)^{-1}\xi_n)_{\omega}.
}
The latter identity follows similarly.
\end{proof}
\begin{remark}
Note that $(\widetilde{A}_{\omega}-i)^{-1}\xi=((A-i)^{-1}\xi_n)_{\omega}$ holds even if $(\xi_n)_n$ is not proper. The only requirement is $A$-regularity: $(\xi_n)_{\omega}\in K(A)$. 
\end{remark}

We are now ready to prove Theorem \ref{thm: definition of A^{omega} is appropriate}.
\begin{proof}[Proof of Theorem \ref{thm: definition of A^{omega} is appropriate}]
$\dom{\widetilde{A}_{\omega}}=\widehat{\mathscr{D}}_A$ is proved in Lemma \ref{lem: D(tildeA)subset D(hatA)}. 
%It is clear that $\mathscr{D}_0$ is dense in $K(A)$. 
Then for every $\xi\in \widehat{\mathscr{D}}_A$ and $\varepsilon>0$, there exists $\eta\in \mathscr{D}_0$ such that $\|\xi-\eta\|_{\widetilde{A}_{\omega}}<\varepsilon$ holds (cf. Lemma \ref{lem: hat{A}subset tilde{A}}). Therefore $\widetilde{A}_{\omega}$ is the closure of $\widetilde{A}_{\omega}|_{\mathscr{D}_0}$.
\end{proof}
\section{Alternative Description of $A_{\omega}$}
Now we are ready to show 
\begin{theorem}Under the same notations as in $\S$3, the following holds.
\begin{itemize}
\item[{\rm{(1)}}] $K(A)=H(A)$, and $A_{\omega}=\widetilde{A}_{\omega}$. Moreover, $\mathscr{D}_0$ is a core for $A_{\omega}$. 
\item[{\rm{(2)}}] For a representative $(\xi_n)_n$ of $\xi\in \dom{A_{\omega}}$, $A_{\omega}\xi=(A\xi_n)_{\omega}$ holds if and only if it is a proper $A$-sequence (see Definition \ref{def: proper A-sequences}).
\end{itemize}
\end{theorem}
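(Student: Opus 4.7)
The plan is to reduce assertion (1) to the identity $K(A)=H(A)$, after which $A_\omega=\widetilde{A}_\omega$ is forced by comparing resolvents at $i$, and then (2) follows from Lemma \ref{lem: hat{A}subset tilde{A}} combined with the byproduct equality $\widehat{\mathscr{D}}_A=\mathscr{D}_A$.

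For $K(A)\subset H(A)$, I would note that by Lemma \ref{lem: D(tildeA)subset D(hatA)} the self-adjoint operator $\widetilde{A}_\omega$ has dense domain $\widehat{\mathscr{D}}_A$ inside $K(A)$; since $\widehat{\mathscr{D}}_A\subset\mathscr{D}_A\subset H(A)$ and $H(A)$ is closed, $K(A)=\overline{\widehat{\mathscr{D}}_A}\subset H(A)$. For the reverse inclusion, Lemma \ref{lem: K is closed} reduces the task to showing $\mathscr{D}_A\subset K(A)$. Given $\xi=(\xi_n)_\omega\in\mathscr{D}_A$, choose a representative with $\xi_n\in\dom{A}$ and $C:=\sup_n\|A\xi_n\|<\infty$. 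The spectral resolution of $A$ gives
\[
\|e^{itA}\xi_n-e^{isA}\xi_n\|^2=\int_{\mathbb{R}}|e^{it\lambda}-e^{is\lambda}|^2\,d\|e(\lambda)\xi_n\|^2\le (t-s)^2\|A\xi_n\|^2\le (t-s)^2C^2,
\]
so $\{t\mapsto e^{itA}\xi_n\}_n$ is uniformly (hence $\omega$-) equicontinuous and $\xi\in K(A)$.

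With $K(A)=H(A)$ in hand, I compare $A_\omega$ and $\widetilde{A}_\omega$ via their resolvents. Theorem \ref{thm: KZ p.u.}(3) identifies $(A_\omega-i)^{-1}$ with the restriction of $((A-i)^{-1})_\omega$ to $H(A)$, and Lemma \ref{lem: resolvent of Atilde is UP of A} yields the same description of $(\widetilde{A}_\omega-i)^{-1}$ on $K(A)=H(A)$; equality of resolvents at a regular point forces $A_\omega=\widetilde{A}_\omega$. The core assertion for $\mathscr{D}_0$ then transfers directly from Theorem \ref{thm: definition of A^{omega} is appropriate}, and as a byproduct $\mathscr{D}_A=\dom{A_\omega}=\dom{\widetilde{A}_\omega}=\widehat{\mathscr{D}}_A$.

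For (2), the forward direction is exactly Lemma \ref{lem: hat{A}subset tilde{A}}. For the converse, suppose $\xi\in\dom{A_\omega}$ has a representative $(\xi_n)_n$ with $\xi_n\in\dom{A}$, $(A\xi_n)_n$ bounded, and $A_\omega\xi=(A\xi_n)_\omega$. Using $\mathscr{D}_A=\widehat{\mathscr{D}}_A$, select a proper representative $(\xi_n')_n$ of $\xi$; Lemma \ref{lem: hat{A}subset tilde{A}} also gives $A_\omega\xi=(A\xi_n')_\omega$, so $\lim_{n\to\omega}\|\xi_n-\xi_n'\|=0$ and $\lim_{n\to\omega}\|A\xi_n-A\xi_n'\|=0$, whence $\lim_{n\to\omega}\|\xi_n-\xi_n'\|_A=0$. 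Given $\varepsilon>0$, invoke properness of $(\xi_n')_n$ at level $\varepsilon/2$ to produce $a>0$ and an $A$-sequence $(\eta_n)_n$ with $\eta_n\in 1_{[-a,a]}(A)H$ and $\lim_{n\to\omega}\|\xi_n'-\eta_n\|_A<\varepsilon/2$; the triangle inequality in graph norm yields $\lim_{n\to\omega}\|\xi_n-\eta_n\|_A<\varepsilon$, so $(\xi_n)_n$ is proper. The only genuinely new computation in this theorem is the spectral estimate proving $\mathscr{D}_A\subset K(A)$; the remainder is resolvent matching and graph-norm bookkeeping.
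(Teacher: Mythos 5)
Your proof is correct, and for most of the argument (the spectral estimate giving $\mathscr{D}_A\subset K(A)$, the transfer of the core property from Theorem \ref{thm: definition of A^{omega} is appropriate}, and the graph-norm bookkeeping in part (2)) it coincides with the paper's. The one place you genuinely diverge is in establishing $K(A)\subset H(A)$ and $A_{\omega}=\widetilde{A}_{\omega}$: the paper observes that, by Lemmas \ref{lem: hat{A}subset tilde{A}} and \ref{lem: D(tildeA)subset D(hatA)}, $\widetilde{A}_{\omega}$ is a partial ultrapower of $A$ in $K(A)$, and then invokes the maximality statement of Theorem \ref{thm: KZ p.u.}(2) to get both $K(A)\subset H(A)$ and $\widetilde{A}_{\omega}=A_{\omega}|_{K(A)}$ in one stroke (equality of the two self-adjoint operators then follows once the spaces coincide, since a self-adjoint operator has no proper self-adjoint extension). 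You instead obtain $K(A)\subset H(A)$ from the density of $\widehat{\mathscr{D}}_A=\dom{\widetilde{A}_{\omega}}$ in $K(A)$ together with $\widehat{\mathscr{D}}_A\subset\mathscr{D}_A\subset H(A)$, and then identify the two operators by matching resolvents at $i$ via Theorem \ref{thm: KZ p.u.}(3) and Lemma \ref{lem: resolvent of Atilde is UP of A}. Both routes are legitimate and use only results already available at that point; yours leans on the explicit resolvent formula (part (3) of the Krupa--Zawisza theorem) where the paper leans on the abstract maximality (part (2)), and yours has the minor aesthetic advantage of not needing to re-verify that $\widetilde{A}_{\omega}$ satisfies the definition of a partial ultrapower. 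Your treatment of (2) is the same as the paper's, including the observation that $(A\xi_n)_{\omega}=(A\xi_n')_{\omega}$ forces $\lim_{n\to\omega}\|\xi_n-\xi_n'\|_A=0$ so that properness transfers across representatives.
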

\begin{proof} (1) 
By construction, it is clear that $\widetilde{A}_{\omega}$ is a p.u. of $A$ in $K(A)\subset H_{\omega}$. Therefore by the maximality of $A_{\omega}$, Theorem \ref{thm: KZ p.u.} (2), $K(A)\subset H(A)$ and $\widetilde{A}_{\omega}=A_{\omega}|_{K(A)}$. Therefore if we show that $K(A)=H(A)$, then $\widetilde{A}_{\omega}=A_{\omega}$ holds. To show $H(A)\subset K(A)$, suppose that $(\xi_n)_n$ is a representing sequence of $\xi\in \mathscr{D}_A$ with $(A\xi_n)_n\in \ell^{\infty}(\mathbb{N},H)$. We show that $\{f_n:t\mapsto e^{itA}\xi_n\}_{n=1}^{\infty}$ is $\omega$-equicontinuous. Let $C:=\sup_{n}\|A\xi_n\|$. Then for $t,s\in \mathbb{R}$, as in the analysis in $\S$3,
\[\|e^{itA}\xi_n-e^{isA}\xi_n\|^2=\int_{\mathbb{R}}|e^{it\lambda}-e^{is\lambda}|^2d\|e(\lambda)\xi_n\|^2\le (t-s)^2\|A\xi_n\|^2\le C^2(t-s)^2,\]
which tends to 0 as $(t-s)\to 0$ uniformly in $n$. Therefore $\{f_n\}_{n=1}^{\infty}$ is $\omega$-equicontinuous. Therefore $\mathscr{D}_A\subset K(A)$, and taking the closure, $H(A)\subset K(A)$ holds. Therefore $H(A)=K(A)$. By Theorem \ref{thm: definition of A^{omega} is appropriate}, $\mathscr{D}_0$ is a core for $A_{\omega}=\widetilde{A}_{\omega}$. \\
(2) This follows from (1), Theorem \ref{thm: definition of A^{omega} is appropriate}, Lemma \ref{lem: hat{A}subset tilde{A}} and a simple observation that if $A_{\omega}\xi=(A\xi_n)_{\omega}$ and if $(\xi_n')_n$ is another proper $A$-sequence representing $\xi$, then for every $\varepsilon>0$ there is $a>0$ and an $A$-sequence $(\eta_n)_n$ with $\eta_n\in 1_{[-a,a]}(A)H\ (n\in \mathbb{N})$ such that 
$\lim_{n\to \omega}\|\xi_n-\eta_n\|_A=\lim_{n\to \omega}\|\xi_n'-\eta_n\|_A<\varepsilon$, so that $(\xi_n)_n$ is proper as well. 
\end{proof}
\begin{remark}\label{rem: (Axi_n') is bad}
Finally, let us return to Example \ref{ex: (Axi_n) not equals (Axi_n')}. We note that $(\xi_n)_n$ is proper, while $(\xi_n')_n$ is not. The first claim is obvious. For the latter, if it were proper, then so is $(\frac{1}{n}\eta_n)_n$. But if $(\frac{1}{n}\eta_n)_n$ were proper, there exists an $A$-sequence $(\zeta_n)_n$ and $a>0$ for which $\zeta_n\in 1_{[-a,a]}(A)H\ (n\in \mathbb{N})$ and $\lim_{n\to \omega}\|\zeta_n-\frac{1}{n}\eta_n\|<1/2$ and $\lim_{n\to \omega}\|A\zeta_n-\eta_n\|<\frac{1}{2}$. Let $n_0\in \mathbb{N}$ such that $n_0>|a|$. Then for $n\ge n_0$, $\eta_n\in 1_{\{n\}}(A)H$, so $\eta_n\perp \zeta_n$. Thus
\[\lim_{n\to \omega}\|A\zeta_n-\eta_n\|^2=\lim_{n\to \omega}\|A\zeta_n\|^2+1<\frac{1}{4},\]
which is a contradiction. Thus $(\frac{1}{n}\eta_n)_n$, whence $(\xi_n')_n$, is not proper. Note also that $(\eta_n)_{\omega}$ is perpendicular to $H(A)$, and in particular $(A\xi_n')_{\omega}\notin H(A)$. To see this, let $(\xi_n)_n$ be an $A$-sequence such that $(A\xi_n)_n$ is bounded. Then 
\eqa{
\left |\lim_{n\to \omega}\nai{\eta_n}{\xi_n}\right |&=\left |\lim_{n\to \omega}\nai{\eta_n}{(A-i)^{-1}(A+i)\xi_n}\right |\\
&\le \lim_{n\to \omega}\frac{1}{|n+i|}\|(A+i)\xi_n\|=0.
}
Thus $(\eta_n)_{\omega}\in \mathscr{D}_A^{\perp}=H(A)^{\perp}$.
\end{remark}
\section*{Acknowledgement}
The current research started from the authors discussion during the workshop 15th Noncommutative Harmonic Analysis (September 23-29 2012) in Bedlewo. We thank the organizers for their hospitality extended to them. We also thank Yasumichi Matsuzawa for many useful comments which improved the presentation of the paper.   

Hiroshi Ando\\
Institut des Hautes \'Etudes Scientifiques,\\
Le Bois-Marie 35, route de Chartres,\\
91440 Bures-sur-Yvette, France\\
ando@ihes.fr\\ \\
Izumi Ojima\\
Research Institute for Mathematical Sciences,\\
Kyoto University,\\
Sakyo-ku, Kitashirakawa Oiwakecho\\
606-8502 Kyoto, Japan\\
ojima@kurims.kyoto-u.ac.jp\\ \\
Hayato Saigo\\
Nagahama Institute of Bio-Science and Technology, \\
Nagahama 526-0829, Japan\\
h\_saigoh@nagahama-i-bio.ac.jp
\end{document}